\newcommand{\cmark}{\ding{51}}
\newcommand{\xmark}{\ding{55}}
\newcommand\xrowht[2][0]{\addstackgap[.5\dimexpr#2\relax]{\vphantom{#1}}}
\patchcmd{\@sect}{\@svsec\@empty}{\@svsec\@empty\stepcounter{#1}}{}{}
\newcommand{\R}{\mathbb{R}}
\newcommand{\C}{\mathbb{C}}
\newcommand{\cm}[1]{\ignorespaces}
\theoremstyle{plain}
\newtheorem{theorem}{Theorem}[section]
\newtheorem{corollary}[theorem]{Corollary}
\newtheorem{lemma}[theorem]{Lemma}
\newtheorem{proposition}[theorem]{Proposition}
\theoremstyle{definition}
\newtheorem{definition}[theorem]{Definition}
\newtheorem{example}[theorem]{Example}
\newtheorem{remark}[theorem]{Remark}
\numberwithin{equation}{section}
\begin{document}

\title[Balanced  Hermitian structures on almost abelian Lie algebras]{Balanced  Hermitian structures \\ on almost abelian Lie algebras}
\author{Anna Fino}
\address[A. Fino]{Dipartimento di Matematica ``G. Peano''\\
	Universit\`a di Torino\\
	Via Carlo Alberto 10\\
	10123 Torino, Italy} \email{annamaria.fino@unito.it}

\author{Fabio Paradiso}
\address[F. Paradiso]{Dipartimento di Matematica ``G. Peano''\\
	Universit\`a di Torino\\
	Via Carlo Alberto 10\\
	10123 Torino, Italy} \email{fabio.paradiso@unito.it}

\subjclass[2010]{53C15, 53C30, 53C44, 53C55}
\keywords{Almost abelian Lie algebras, Hermitian metrics, Balanced metrics, Anomaly flow}

\begin{abstract}
We study  balanced  Hermitian structures on almost abelian Lie algebras, i.e.\ on Lie algebras with a codimension-one abelian ideal. In particular, we classify six-dimensional almost abelian Lie algebras  which carry a  balanced structure. It has been conjectured in \cite{FV} that a compact complex manifold admitting both a balanced metric and an SKT metric necessarily has a K\"ahler metric: we prove this conjecture for compact almost abelian solvmanifolds with left-invariant complex structures. Moreover, we investigate the behaviour of the flow of balanced metrics introduced in \cite{BV} and of the anomaly flow \cite{PPZ2} on almost abelian Lie groups. In particular, we show that the anomaly flow preserves the balanced condition and that locally conformally K\"ahler metrics are fixed points.
\end{abstract}

\maketitle

\section{Introduction}

Let $(M,J)$ be a complex manifold of real dimension $2n$, $n \geq 3$. A Hermitian metric $g$ is called \emph{balanced} (or \emph{semi-K\"ahler}) if its associated fundamental form $\omega=g(J\cdot,\cdot)$ is coclosed, namely $d^*\omega=0$, or equivalently $d \omega^{n -1} =0$.  Balanced metrics were introduced in \cite{Mic} and provide a generalization of K\"ahler metrics. Another possible generalization is given  by \emph{strong K\"ahler with torsion} (SKT, also known as \emph{pluriclosed}) metrics, satisfying $\partial \overline\partial \omega =0$ (see for instance \cite{FT1, FPS}). SKT metrics have natural applications in type II string theory and $2$-dimensional supersymmetric $\sigma$-models \cite{GHR, Str} and are closely related to generalized K\"ahler structures \cite{Gua,  Gua2, AG, FT, FP}. The balanced and SKT conditions are transversal to each other, in the sense that a Hermitian metric which is balanced and SKT is necessarily K\"ahler (see \cite{AI}). Moreover, it has been conjectured that a compact complex manifold admitting both balanced and SKT metrics carries a K\"ahler metric as well \cite{FV}. This has been confirmed on some special classes of compact complex manifolds: twistor spaces of compact anti-self-dual Riemannian manifolds \cite{Ver}, non-K\"ahler manifolds belonging to the Fujiki class $\mathcal{C}$ \cite{Chi}, a family of non-K\"ahler Calabi-Yau threefolds   constructed in \cite{FLY}, generalized Calabi-Gray manifolds \cite{Fei}, Oeljeklaus-Toma manifolds \cite{Oti}, $2$-step nilmanifolds \cite{FV1} and six-dimensional solvmanifolds with holomorphically trivial canonical bundle \cite{FV} when the complex structure is left-invariant.

By \cite{AG}, every Hermitian metric on a unimodular \emph{complex} Lie algebra $\mathfrak{g}$ (which can be identified with a real Lie algebra endowed with an $\text{ad}$-invariant complex structure) is balanced. If, moreover, $\mathfrak{g}$ is semisimple, then $\omega^{n-1}$ is also $\partial \overline \partial$-exact (see \cite{Yac}).

In \cite{Gau}, it was proven that a Hermitian manifold $(M,J,g)$ admits a $1$-parameter family $\{\nabla^{\tau}\}_{\tau \in \R}$ of canonical Hermitian connections, satisfying $\nabla^{\tau}J=0$, $\nabla^{\tau}g=0$. Explicitly, $\nabla^{\tau}$ is determined by
\begin{equation} \label{nablatau}
g\left(\nabla_{X}^{\tau} Y, Z\right)=g\left(\nabla_{X}^g Y, Z\right)+\frac{1-\tau}{4} T(X, Y, Z)+\frac{1+\tau}{4} C(X, Y, Z), \quad X,Y,Z \in \Gamma(TM),
\end{equation}
where $\nabla^g$ is  the Levi-Civita connection associated with $g$ and  $T$ and $C$ are given respectively by
\[
T (X, Y,Z) =-Jd\omega (X, Y, Z) =d\omega(J XJ Y,JZ), \;\; C (X, Y, Z)=-d\omega(J X, Y,Z),  \;\; X,Y,Z \in \Gamma(TM) .
\]
Notable connections in this family include the \emph{Bismut connection}  $\nabla^B$ \cite{Bismut} and the \emph{Chern connection} $\nabla^C$, corresponding to $\tau=-1$ and $\tau=1$ respectively.
In particular, $\nabla^B$  is the unique Hermitian connection with totally skew-symmetric torsion, given by  the 3-form $T$.

For each canonical Hermitian connection $\nabla^{\tau}$, one can define its associated \emph{Ricci form} $\rho^\tau \in \Gamma(\Lambda^2T^*M)$, locally given by
\[
\rho^{\tau}(X,Y)=-\frac{1}{2} \sum_{i=1}^{2n} g\left(R^\tau(X,Y)e_i,Je_i\right),\quad X,Y \in \Gamma_{\text{loc}}(TM),
\]
where $\{e_1,\ldots,e_{2n}\}$ is a local $g$-orthonormal frame and $R^{\tau}(X,Y)=[\nabla^{\tau}_X,\nabla^{\tau}_Y]-\nabla^{\tau}_{[X,Y]}$ denotes the curvature of $\nabla^{\tau}$. By \cite{AI}, the Chern-Ricci form $\rho^C$  and the Bismut-Ricci form $\rho^B$ are related by 
\begin{equation} \label{ChernBismut}
\rho^C=\rho^B-dJ\theta,
\end{equation}
where $\theta$ is the \emph{Lee form}, namely the unique $1$-form $\theta$ such that
$d\omega^{n-1}= \theta \wedge \omega^{n-1}$. Equivalently, $\theta=Jd^*\omega=-J*d*\omega$, where $*$ denotes the Hodge star operator of $g$ (for the orientation induced by $J$) and $J$ acts on $1$-forms by $J\alpha=-\alpha(J\cdot)$. The vanishing of $\theta$ is equivalent to the balanced condition and implies $\rho^B=\rho^C$.

Since $\nabla^B$ is Hermitian, its restricted holonomy group, denoted by $\text{Hol}^0(\nabla^B)$, is contained in the unitary group $\text{U}(n)$. Hermitian  structures satisfying $\text{Hol}^0(\nabla^B) \subseteq \text{SU}(n)$, or equivalently $\rho^B=0$, are known in literature as \emph{Calabi-Yau with torsion} and appear in heterotic string theory, related to the Hull-Strominger system in six dimensions (\cite{Hull, Str, LY}).

A (solvable) Lie algebra $\mathfrak{g}$ is called \emph{almost abelian} if it contains an abelian ideal $\mathfrak{n}$ of codimension one. If $\mathfrak{g}$ is non-nilpotent, this ideal is unique and coincides with the nilradical of $\mathfrak{g}$.

Hermitian structures $(J, g)$  on $\mathfrak{g}$ can be characterized in terms of the behaviour of the matrix associated with $\text{ad}_{e_{2n}}\rvert_{\mathfrak{n}}$, with respect to some \emph{adapted} unitary basis $\{e_1,\ldots,e_{2n}\}$  of $(\mathfrak{g}, J, g)$  such that $\mathfrak{n}=\text{span}\left<e_1,\ldots,e_{2n-1}\right>$, $Je_i=e_{2n+1-i}$, $i=1,\ldots,n$, $\mathfrak{k} \coloneqq \mathfrak{n}^{\perp_g}=\R e_{2n}$. See \cite{LV, LW, FP,AO,AL} for the characterization of K\"ahler, locally conformally K\"ahler (LCK) and SKT metrics on almost abelian Lie algebras.
In this paper we prove  that balanced  Hermitian structures are characterized by the conditions $[Z,J Z] \in \R JZ$,
 $\text{tr} \left(\text{ad}_{Z} \rvert_{\mathfrak{n} \cap J\mathfrak{n}}\right)=0$,
  for any $Z \in \mathfrak{k}$. 
In particular, we show that a unimodular balanced Hermitian almost abelian Lie algebra is always decomposable. Moreover,  we prove that  a compact  almost abelian solvmanifold with a left-invariant complex structure admitting both a balanced metric and an SKT metric necessarily has a K\"ahler metric, where by compact  almost abelian solvmanifold we mean the compact quotient of a simply connected almost abelian Lie group by a lattice.

In \cite{FPS}, it was proven that a Hermitian structure on a $2n$-dimensional nilpotent Lie algebra is balanced if and only if 
$\text{Hol}^0(\nabla^B) \subseteq \text{SU}(n)$. The same result holds more in general   for  compact  complex manifolds   of complex dimension $n$ admitting   a closed $(n,0)$-form (see \cite[Proposition 3.6]{Gar}).
In this paper, we prove that the analogous equivalence holds for unimodular almost abelian Lie algebras, also when the complex structure does not admit closed $(n,0)$-forms.
We also recall that, by \cite[Theorem 15]{AV}, such a reduction of the restricted Bismut holonomy group holds for balanced metrics on unimodular Lie algebras endowed with \emph{abelian} complex structures $J$, namely such that $[JX,JY]=[X,Y]$ for all vector fields $X,Y$.

A parabolic flow involving balanced metrics  has been introduced  in \cite{BV}.   This flow of Hermitian metrics, which we shall call \emph{balanced flow}, preserves  the balanced condition and reduces to the well-known \emph{Calabi flow} (see \cite{Cal, Cal2}) for K\"ahler initial data. In \cite{BV1}, the same authors proved that this flow is stable around Ricci-flat K\"ahler metrics. In this paper, we study left-invariant solutions on Lie groups, introducing the notion of \emph{semi-algebraic soliton}, which is related to self-similar solutions. We then rephrase the problem in terms of \emph{bracket flows} in the almost abelian case, proving results about the long-time behaviour of the flow.

The \emph{anomaly flow}, introduced in \cite{PPZ, PPZ2} is a flow of $(2, 2)$-forms on a  complex  $3$-fold which arises from the Hull-Strominger system \cite{Hull,Str}, in the context of heterotic string theory. In its simplest formulation, the anomaly flow is a flow for a Hermitian metric on a  complex $3$-fold with holomorphically trivial canonical bundle, depending on the choice of a Hermitian connection on the underlying manifold. In the case of the Chern connection, it was shown in \cite{PPZ} that the flow preserves the conformally balanced condition. In \cite{PPZ2}, the flow was studied for unimodular complex Lie groups, while recently it was considered on $2$-step nilpotent Lie groups \cite{PPZ3},  showing that   it  is well-defined for any canonical Hermitian connection $\nabla^\tau$ and preserves the balanced and LCK conditions. In this paper, we prove that, in the context of left-invariant structures on almost abelian Lie groups and for any choice of canonical Hermitian connection $\nabla^\tau$,  the flow preserves the balanced condition and LCK metrics are fixed points, namely they provide constant solutions of the flow.

The paper is structured as follows: in Section \ref{Herm} we introduce the framework of Hermitian almost abelian Lie algebras.  In particular, in terms of algebraic data associated to a fixed adapted unitary basis, we provide a formula  for the Lee form $\theta$ and we investigate Hermitian almost abelian Lie algebras admitting closed $(n,0)$-forms, where $n$ denotes the complex dimension of the Lie algebra.

In Section \ref{balancedmetrics}, we characterize balanced Hermitian almost abelian Lie algebras and  prove the equivalence of the balanced and Calabi-Yau with torsion conditions in the unimodular case. As a consequence  we show that  the aforementioned conjecture regarding balanced and SKT metrics holds for compact almost abelian solvmanifolds of any dimension endowed with left-invariant complex structures.

Section \ref{6-dim} is devoted to the classification of six-dimensional non-nilpotent almost abelian Lie algebras admitting balanced Hermitian structures. In particular, up to isomorphism, we obtain four classes of six-dimensional non-nilpotent unimodular almost abelian Lie algebras, all of which are decomposable as the direct sum of a $5$-dimensional almost abelian Lie algebra and $\R$. We also investigate whether the corresponding simply connected Lie groups admit compact quotients.

Finally, in Sections \ref{sec_bal} and \ref{sec_anom}  we study the balanced flow and the anomaly flow on almost abelian Lie groups, respectively, presenting the aforementioned results. 

\smallskip

\emph{Acknowledgements}. The authors would like to thank Lucio Bedulli, Mattia Pujia and Luigi Vezzoni for useful comments and discussions. They would also like to thank an anonymous referee for useful remarks. The paper is supported by Project PRIN 2017 ``Real and complex manifolds: Topology, Geometry and Holomorphic Dynamics'' and by GNSAGA of INdAM.

\section{Hermitian almost abelian Lie algebras} \label{Herm}
Assume $(J,g)$ is a Hermitian structure on a $2n$-dimensional almost abelian Lie algebra $\mathfrak{g}$. In particular, $J$ is an (integrable) complex structure on $\mathfrak{g}$. Fix an abelian ideal of codimension one $\mathfrak{n}$ and denote by $\mathfrak{n}_1 \coloneqq \mathfrak{n} \cap J\mathfrak{n}$ the maximal $J$-invariant subspace of $\mathfrak{n}$. Then there exists a unitary basis $\{e_1,\ldots,e_{2n}\}$ of $\mathfrak{g}$, such that $\mathfrak{n}=\text{span}\left<e_1,\ldots,e_{2n-1}\right>$, $\mathfrak{n}_1=\text{span}\left<e_2,\ldots,e_{2n-1}\right>$, $Je_i=e_{2n+1-i}$, $i=1,\ldots,n$. As shown in \cite{LV}, the matrix $B$ associated with $\text{ad}_{e_{2n}}\rvert_{\mathfrak{n}}$ in this basis is of the form
\begin{equation} \label{B}
	B=\begin{pmatrix} a & 0 \\ v & A \end{pmatrix}, \quad a \in \R,\,v \in \mathfrak{n}_1,\, A \in \mathfrak{gl}(\mathfrak{n}_1,J_1),
\end{equation}
where $J_1 \coloneqq J\rvert_{\mathfrak{n}_1}$ and $\mathfrak{gl}(\mathfrak{n}_1,J_1)$ denotes endomorphisms of $\mathfrak{n}_1$ commuting with $J_1$.
We denote $\mathfrak{k}\coloneqq \mathfrak{n}^{\perp_g}=\R e_{2n}$ and we say that the unitary basis $\{e_1,\ldots,e_{2n}\}$ is \emph{adapted} to the splitting $\mathfrak{g} = J\mathfrak{k} \oplus \mathfrak{n}_1 \oplus \mathfrak{k}$. The triple $(a,v,A)$ characterizes the Hermitian structure $(J,g)$ completely so that, in the following, we often denote a Hermitian almost abelian Lie algebra by $(\mathfrak{g}(a,v,A),J,g)$ to keep track of the algebraic data $(a,v,A)$.

For Hermitian structures on generic Lie algebras, we have
\begin{lemma} \label{leeform_gen}
		Let $(\mathfrak{g},J,g)$ be a Hermitian Lie algebra endowed with an orthonormal basis $\{e_1,\ldots,e_{2n}\}$. Then, its associated Lee form $\theta$ is given by
		\[
		\theta(X)=-\operatorname{tr} \text{\normalfont ad}_X + \frac{1}{2} g\left(\sum_{k=1}^{2n} [e_k,Je_k],JX\right),\quad X \in \mathfrak{g}.
		\]
\end{lemma}
\begin{proof}
		To compute $\theta$ we use the formula $d^*=-\sum_{k=1}^{2n} \iota_{e_k} \nabla^g_{e_k}$ for the codifferential, where $\{e_1,\ldots,e_{2n}\}$ denotes some orthonormal basis and $\nabla^g$ denotes the Levi-Civita connection for $g$, which satisfies the Koszul formula
		\begin{equation} \label{koszul}
			2g(\nabla^g_X Y, Z)=g([X,Y],Z)-g([Y,Z],X)-g([X,Z],Y),\quad X,Y,Z \in \mathfrak{g}.
		\end{equation}
		Then, for all $X \in \mathfrak{g}$, one can compute
		\begin{align*}
			\theta(X)=&-\sum_{k=1}^{2n} \left( g(\nabla^g_{e_k} e_k,X) + g(\nabla^g_{e_k} JX, Je_k) \right) \\
			=& - \frac{1}{2} \sum_{k=1}^{2n} \left( 2g([X,e_k],e_k) + g(J[JX,e_k],e_k) - g([JX,Je_k],e_k) - g([e_k,Je_k],JX) \right) \\
			=& -\operatorname{tr} \text{ad}_X + \frac{1}{2} \operatorname{tr} [\text{ad}_{JX},J] + \frac{1}{2} \sum_{k=1}^{2n} g([e_k,Je_k],JX).
		\end{align*}
		The claim then follows by noticing that the commutator of two endomorphisms is traceless.
\end{proof}

\begin{corollary} \label{leeform}
Let $(\mathfrak{g},J,g)$ be a Hermitian almost abelian Lie algebra endowed with an adapted unitary basis $\{e_1,\ldots,e_{2n}\}$, determining the algebraic data $(a,v,A)$. Then, its associated Lee form is
\begin{equation} \label{Lee_avA}
	\theta=(Jv)^{\flat} - (\operatorname{tr} A)e^{2n},
\end{equation}
where $(\cdot)^\flat \colon \mathfrak{g} \to \mathfrak{g}^*$ denotes the musical isomorphism induced by $g$ and $e^{2n}$ is the dual $1$-form associated with $e_{2n}$.
\end{corollary}
\begin{proof}
In terms of $(a,v,A)$, we can use the previous lemma and compute
\begin{align*}
		\theta(e_1)&=0, \\
		\theta(X)&=g(-ae_1-v,JX)=g(ae_{2n}+Jv,X)=g(Jv,X),\quad X \in \mathfrak{n}_1, \\
		\theta(e_{2n})&=-(a+\operatorname{tr} A)+g(ae_1+v,e_1)=-\operatorname{tr} A,
\end{align*}
yielding \eqref{Lee_avA}.
\end{proof}

\begin{remark}
Recall that a complex structure $J$ on a Lie algebra $\mathfrak{g}$ is called \emph{abelian} if it satisfies $[JX,JY]=[X,Y]$ for all $X,Y \in \mathfrak{g}$. Abelian complex structures occur very rarely on almost abelian Lie algebras. Let $J$ be an abelian complex structure on a $2n$-dimensional almost abelian Lie algebra $\mathfrak{g}$ and let $g$ be any $J$-Hermitian metric on $\mathfrak{g}$, so that we can consider an adapted unitary basis $\{e_1,\ldots,e_{2n}\}$ with associated algebraic data $(a,v,A)$. Then, using that $J$ is abelian and that $\mathfrak{n}$ is an abelian ideal, we have
\[
\text{ad}_{e_{2n}} \rvert_{\mathfrak{n}_1} = - \text{ad}_{e_1} \rvert_{\mathfrak{n}_1} \circ J_1 = 0,
\]
so that $A$ in \eqref{B} vanishes. Then, it is easy to see that $\mathfrak{g}$ is either isomorphic to $\R^{2n}$ (if $a=0$, $v=0$), the nilpotent $\mathfrak{h}_3 \oplus \R^{2n-3}$ (if $a=0$, $v \neq 0$) or the non-unimodular $\mathfrak{aff}_2 \oplus \R^{2n-2}$ (if $a \neq 0$), where $\mathfrak{h}_3\cong(0,0,f^{12})$ denotes the $3$-dimensional real Heisenberg algebra and $\mathfrak{aff}_2\cong(0,f^{12})$ denotes the $2$-dimensional real affine Lie algebra.
\end{remark}

Let $(\mathfrak{g}(a,v,A),J,g)$ be a Hermitian almost abelian Lie algebra and denote by $\{e_1,\ldots,e_{2n}\}$ the fixed unitary basis adapted to the splitting $\mathfrak{g}=J\mathfrak{k} \oplus \mathfrak{n}_1 \oplus \mathfrak{k}$.
In \cite{AL}, the authors proved that the \emph{Bismut-Ricci form} $\rho^B$ of $(\mathfrak{g}(a,v,A),J,g)$ is given by
\begin{equation}\label{rhoB}
\rho^B=-(a^2-\tfrac{1}{2}a\operatorname{tr}A+\lVert v \rVert^2)\, e^1 \wedge e^{2n} - (A^tv)^{\flat} \wedge e^{2n},
\end{equation}
while, as proven in \cite{LV}, its Chern-Ricci form $\rho^C$ is given by
\begin{equation} \label{rhoC}
\rho^C=-a\left(a+\tfrac{1}{2}\operatorname{tr}A\right) e^1 \wedge e^{2n}.
\end{equation}
Any of the two previous formulas can easily be obtained from the other.
Given any $1$-form $\alpha \in \mathfrak{g}^*$, since
$d\alpha(X,e_{2n})=\alpha([e_{2n},X])$, $X \in \mathfrak{g}$, one has
\[
d\alpha=(\text{ad}_{e_{2n}}^t \alpha) \wedge e^{2n} = (a\alpha(e_1)+\alpha(v))\,e^1 \wedge e^{2n} + A^t(\alpha\rvert_{\mathfrak{n}_1}) \wedge e^{2n}.
\]
Recalling \eqref{Lee_avA}, we then have
\[
-dJ\theta=\left( -a\operatorname{tr} A + \lVert v \rVert^2 \right)e^1 \wedge e^{2n} + (A^tv)^\flat \wedge e^{2n},
\]
which, by \eqref{ChernBismut}, can be used to determine \eqref{rhoC} from \eqref{rhoB} and vice versa.

We end this section with the characterization of Hermitian almost abelian Lie algebras  of complex dimension $n$ admitting a closed $(n,0)$-form.
We recall the following basic linear algebra fact: if $A$ is an endomorphism of a real vector space $V$ which commutes with a complex structure $J$ on $V$, $A$ can be seen as an endomorphism of the complex vector space $(V,J)$, which we denote by $A_{\C}$, whose trace satisfies
\begin{equation} \label{trC}
\operatorname{tr} A_{\C}= \tfrac{1}{2} \operatorname{tr} A - \tfrac{i}{2} \operatorname{tr} JA.
\end{equation}

\begin{proposition} \label{closed(n,0)}
A Hermitian almost abelian Lie algebra $(\mathfrak{g}(a,v,A),J,g)$ of complex dimension $n$  admits a closed $(n,0)$-form if and only if $a+\operatorname{tr}A_{\C}=0$, that is,
\[
a+\tfrac{1}{2} \operatorname{tr} A=0,\quad \operatorname{tr} JA =0.
\]
If $\mathfrak{g}(a,v,A)$ is unimodular, one has
\[
a=0,\quad \operatorname{tr}A=0,\quad \operatorname{tr} JA=0.
\]
\end{proposition}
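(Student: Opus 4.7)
The plan is to fix a nonzero $(n,0)$-form $\Omega$ and compute $d\Omega$ in terms of the triple $(a,v,A)$. Since $\Lambda^{n,0}\mathfrak{g}^*$ is one-dimensional over $\mathbb{C}$, a closed $(n,0)$-form exists iff this specific $\Omega$ is closed. Concretely, I would take the $(1,0)$-frame $\zeta_i = \frac{1}{\sqrt{2}}(e_i - iJe_i)$ for $i=1,\ldots,n$, with dual $(1,0)$-coframe $\omega^i$, and set $\Omega = \omega^1 \wedge \cdots \wedge \omega^n$. Since $J$ is integrable, $d\Omega$ has pure bidegree $(n,1)$, so it is determined by its evaluations $d\Omega(\zeta_1,\ldots,\zeta_n,\bar\zeta_k)$ for $k=1,\ldots,n$.

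The almost abelian structure delivers a major simplification. The vector $\zeta_1 = \frac{1}{\sqrt 2}(e_1 - ie_{2n})$ is the only member of the $(1,0)$-frame with a component outside $\mathfrak{n}\otimes\mathbb{C}$, while $\zeta_2,\ldots,\zeta_n \in \mathfrak{n}_1\otimes\mathbb{C}$. Since $\mathfrak{n}$ is abelian, every bracket among the elements of $\{\zeta_j,\bar\zeta_j\}_{j\geq 2}$ vanishes. By the Cartan formula, $d\Omega(\zeta_1,\ldots,\zeta_n,\bar\zeta_k)$ reduces to a signed sum of terms $\Omega([\zeta_i,\bar\zeta_k], \zeta_1,\ldots,\widehat{\zeta_i},\ldots,\zeta_n)$, each of which picks out only the $\zeta_i$-coefficient of $[\zeta_i,\bar\zeta_k]$.

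The remaining brackets are computed from $[e_{2n},X] = BX$ and the fact that $A$ commutes with $J_1$. One checks that $[\zeta_1,\bar\zeta_1] = -i(ae_1 + v)$, that $[\zeta_1,\bar\zeta_k] = -i(Ae_k)^{0,1}$ is purely $(0,1)$ for $k \geq 2$, and that $[\zeta_i,\bar\zeta_1] = -i(Ae_i)^{1,0} = -\tfrac{i}{\sqrt 2}\,A_{\mathbb{C}}\zeta_i$ for $i \geq 2$; the useful shortcuts are the identities $(iI + J_1)w = 2i\,w^{1,0}$ and $(J_1 - iI)w = -2i\,w^{0,1}$ for $w\in\mathfrak{n}_1$. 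Since $v \in \mathfrak{n}_1$ has no $\zeta_1$-component, extracting the $\zeta_i$-coefficients shows that $d\Omega(\zeta_1,\ldots,\zeta_n,\bar\zeta_k) = 0$ automatically for $k \geq 2$, while for $k=1$ the sum evaluates to $-\frac{i}{\sqrt 2}\bigl(a + \operatorname{tr}A_{\mathbb{C}}\bigr)$ up to sign. Hence $\Omega$ is closed iff $a + \operatorname{tr}A_{\mathbb{C}} = 0$, which by \eqref{trC} splits into the two real conditions $a + \tfrac{1}{2}\operatorname{tr}A = 0$ and $\operatorname{tr}JA = 0$.

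The unimodular case follows at once: unimodularity is equivalent to $\operatorname{tr}\text{ad}_{e_{2n}}|_{\mathfrak{n}} = \operatorname{tr} B = a + \operatorname{tr}A = 0$, which combined with $a + \tfrac{1}{2}\operatorname{tr}A = 0$ forces $a = 0$ and $\operatorname{tr}A = 0$ separately, while $\operatorname{tr}JA = 0$ persists. The only real obstacle is computational bookkeeping: tracking the $(1,0)/(0,1)$ projections of brackets inside $\mathfrak{n}_1\otimes\mathbb{C}$ and verifying that only the single evaluation at $k=1$ yields a nontrivial constraint; the abelian nature of $\mathfrak{n}$ does the rest.
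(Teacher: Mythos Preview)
Your proposal is correct and takes essentially the same approach as the paper: both compute the differential of the standard $(n,0)$-form built from the adapted unitary basis and use the almost abelian structure to reduce everything to a single trace condition. The only difference is bookkeeping---the paper works dually with the $(1,0)$-coframe $\alpha^k=e^k+ie^{2n+1-k}$, observes that $\overline\partial\rvert_{(\mathfrak{g}^{1,0})^*}$ factors through $\wedge\,\alpha^{\overline 1}$ and hence can be identified with the endomorphism $\tfrac{i}{2}\begin{pmatrix}a&0\\ v_{\mathbb C}&A_{\mathbb C}\end{pmatrix}$, then reads off $\overline\partial\alpha^{1\ldots n}=-\tfrac{i}{2}(a+\operatorname{tr}A_{\mathbb C})\,\alpha^{\overline 1 1\ldots n}$---whereas you evaluate $d\Omega$ on vectors via the Cartan formula; your observation that only the term with $\bar\zeta_1$ survives is exactly dual to the paper's observation that $\operatorname{Im}\overline\partial\subset(\mathfrak{g}^{1,0})^*\wedge\alpha^{\overline 1}$.
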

\begin{proof}

With respect to the fixed adapted unitary basis $\{e_1,\ldots,e_{2n}\}$, consider the basis of $(1,0)$-forms $\{\alpha^1,\ldots,\alpha^n\}$ defined by $\alpha^k=e^k + i e^{2n+1-k}$, $k=1,\ldots,n$, together with their conjugate $(0,1)$-forms $\alpha^{\overline{k}}=e^k - i e^{2n+1-k}$, $k=1,\ldots,n$.

Since $d\mathfrak{g}^* \subset \mathfrak{g}^* \wedge e^{2n}$, the image of $\overline \partial \colon (\mathfrak{g}^{1,0})^* \to \Lambda^{1,1}\mathfrak{g}^*$ lies in $(\mathfrak{g}^{1,0})^* \wedge \alpha^{\overline{1}}$. Thus, $\overline \partial\rvert_{(\mathfrak{g}^{1,0})^*}$ can be identified with an endomorphism of $(\mathfrak{g}^{1,0})^*$. An explicit computation shows that its associated $n \times n$ matrix in the fixed basis $\{\alpha^1,\ldots,\alpha^n\}$ is given by
\[
\frac{i}{2}\begin{pmatrix} a & 0 \\ v_{\C} & A_{\C} \end{pmatrix},
\]
where $A_{\mathbb{C}}$ is seen as the complex $(n-1) \times (n-1)$ matrix corresponding to $A$ with respect to the complex basis 
$\left\{ z_k=\frac{1}{2}(e_k - i e_{2n+1-k} ) \right\}_{k=1,\ldots,n}$ of $\mathfrak{n}_1^{1,0}$ and $v_{\mathbb{C}}=\left(w_1,\ldots,w_{n-1}\right)^t \in \C^{n-1}$ such that
\[
v=\sum_{k=1}^{n-1} \left( w_k z_k + \overline{w_k} \overline{z_k} \right).
\]
We consider the $(n,0)$-form $\alpha^{1\ldots n}$ and compute
\begin{align*}
\overline\partial \alpha^{1\ldots n}
=\overline\partial \alpha^1 \wedge \alpha^{2\ldots n} - \sum_{k=2}^n (-1)^k \alpha^{1\ldots k-1} \wedge \overline\partial \alpha^k \wedge \alpha^{k+1 \ldots n}
=-\tfrac{i}{2}(a+\operatorname{tr} A_{\mathbb{C}}) \alpha^{\overline{1}1\ldots n},
\end{align*}
where, for example, $\alpha^{1\ldots n}$ is a shorthand for the wedge product $\alpha^1 \wedge \ldots \wedge \alpha^n$.
With the aid of \eqref{trC}, the first part of the claim follows. The second part easily follows by recalling that the unimodular hypothesis is equivalent to $a+\operatorname{tr}A=0$.
\end{proof}

Recall that a Hermitian structure $(J,g)$ on a smooth manifold $M$ is called \emph{locally conformally K\"ahler} (LCK) if $M$ admits an open cover $\{U_i\}_{i \in I}$ and smooth functions $f_i \in C^{\infty}(U_i)$, $i \in I$, such that $(J|_{U_i},e^{-f_i}g\rvert_{U_i})$ is a K\"ahler structure on $U_i$, for all $i \in I$. Equivalently, there exists a closed $1$-form $\alpha$ such that $d\omega=\alpha \wedge \omega$.

LCK almost abelian Lie algebras were characterized in \cite{AO}: a Hermitian almost abelian Lie algebra $(\mathfrak{g}(a,v,A),J,g)$ of real dimension $2n$ is LCK if and only if
\begin{equation} \label{LCK1}
	n=2,\quad A=0,
\end{equation}
yielding $\mathfrak{g}\cong \mathfrak{h}_3 \oplus \R$ (if $a=0$) or $\mathfrak{g} \cong \mathfrak{aff}_2 \oplus \R^2$ (if $a \neq 0$), or
\begin{equation} \label{LCK2}
	v=0,\quad A=\lambda\,\text{Id}_{\mathfrak{n}_1} + U,\;\,\lambda \in \R,\,U \in \mathfrak{u}(\mathfrak{n}_1,J_1,g),
\end{equation}
where $\mathfrak{u}(\mathfrak{n}_1,J_1,g)=\mathfrak{so}(\mathfrak{n}_1,g) \cap \mathfrak{gl}(\mathfrak{n}_1,J_1)$.

SKT almost abelian Lie algebras were investigated in \cite{AL}: a Hermitian almost abelian Lie algebra $(\mathfrak{g}(a,v,A),J,g)$ of real dimension $2n$ is SKT if and only if $[A,A^t]=0$ and the eigenvalues of $A$ have real part equal to $0$ or $-\frac{a}{2}$. A classification of  six-dimensional almost abelian Lie algebras admitting SKT structures was obtained in \cite{FP} and, recently, it was generalized to a wider class of $2$-step solvable Lie algebras in \cite{FS}.

\begin{proposition} \label{LCK_SKT}
A (non-K\"ahler) LCK almost abelian Lie algebra $(\mathfrak{g}(a,v,A),J,g)$ of complex dimension $n$ admitting a closed $(n,0)$-form is SKT if and only if $n=3$ or $n=2$ and $\mathfrak{g} \cong \mathfrak{h}_3 \oplus \R$. If $\mathfrak{g}$ is unimodular, then necessarily $\mathfrak{g} \cong \mathfrak{h}_3 \oplus \R$.
\end{proposition}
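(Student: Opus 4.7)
The plan is to combine the three characterizations already stated in the excerpt: the LCK classification \eqref{LCK1}--\eqref{LCK2}, Proposition \ref{closed(n,0)} (closed $(n,0)$-form), and the condition $[A,A^t]=0$ together with the real-part restriction on the eigenvalues of $A$ (SKT). I would then split along the two mutually exclusive LCK branches and determine which branches are compatible with both ``non-K\"ahler'' and ``SKT''. The Lee form computation from Lemma \ref{leeform} gives $\theta=(Jv)^{\flat}-(\operatorname{tr}A)e^{2n}$, so the non-K\"ahler condition (for an LCK structure) means $\theta\neq 0$, which will rule out the degenerate sub-cases.

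First I would dispatch branch \eqref{LCK1}: here $n=2$ and $A=0$, so Proposition \ref{closed(n,0)} immediately forces $a=0$; the non-K\"ahler hypothesis then forces $v\neq 0$, giving $\mathfrak{g}\cong\mathfrak{h}_3\oplus\R$. The SKT conditions $[A,A^t]=0$ and ``eigenvalues of $A$ have real part $0$ or $-a/2$'' are vacuously satisfied, so this branch always contributes. Next, for branch \eqref{LCK2}, with $v=0$ and $A=\lambda\operatorname{Id}_{\mathfrak{n}_1}+U$, $U\in\mathfrak{u}(\mathfrak{n}_1,J_1,g)$, I would compute $A^t=\lambda\operatorname{Id}_{\mathfrak{n}_1}-U$ (since $U$ is skew-symmetric and $\operatorname{Id}$ is symmetric), so
\[
[A,A^t]=[\lambda\operatorname{Id}+U,\lambda\operatorname{Id}-U]=0,
\]
making that half of the SKT requirement automatic. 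Since $U$ commutes with $J_1$ and preserves $g$, the complexification $U_{\C}\in\mathfrak{u}(n-1)$ has purely imaginary trace, hence $\operatorname{tr}U=0$ and $\operatorname{tr}A=2(n-1)\lambda$; moreover the eigenvalues of $A$ all have real part equal to $\lambda$.

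Now I apply the closed $(n,0)$-form condition from Proposition \ref{closed(n,0)}: $a+\tfrac12\operatorname{tr}A=0$ gives $a=-(n-1)\lambda$, while $\operatorname{tr}JA=0$ amounts to $U_{\C}\in\mathfrak{su}(n-1)$ via \eqref{trC}. The Lee-form expression shows non-K\"ahler is equivalent to $\lambda\neq 0$ in this branch. The remaining SKT condition on eigenvalues of $A$ then reduces to $\lambda=-a/2$; combining with $a=-(n-1)\lambda$ yields
\[
\lambda=\tfrac{n-1}{2}\lambda,
\]
and since $\lambda\neq 0$ this forces $n=3$. Conversely, when $n=3$ (and $a\neq 0$, forced by $\lambda\neq 0$), all three sets of equations hold, so every such structure is SKT.

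Assembling the two branches gives exactly the stated dichotomy: the non-K\"ahler LCK almost abelian Lie algebras with closed $(n,0)$-form that are SKT are precisely those with $n=3$ (from branch \eqref{LCK2}) and those with $n=2$ and $\mathfrak{g}\cong\mathfrak{h}_3\oplus\R$ (from branch \eqref{LCK1}). No serious obstacle is anticipated; the only delicate bookkeeping is the translation between the real traces $\operatorname{tr}A,\operatorname{tr}JA$ and the complex trace $\operatorname{tr}A_{\C}$ via \eqref{trC}, and correctly extracting the non-K\"ahler condition from the vanishing of the Lee form in each branch.
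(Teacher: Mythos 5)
Your proposal is correct and follows essentially the same route as the paper: split along the two LCK branches \eqref{LCK1} and \eqref{LCK2}, note that $[A,A^t]=0$ is automatic and that all eigenvalues of $A$ have real part $\lambda=-\tfrac{a}{n-1}$ after imposing $a+\tfrac12\operatorname{tr}A=0$, and conclude that the SKT eigenvalue condition forces $n=3$ in the non-K\"ahler case of \eqref{LCK2}, while \eqref{LCK1} contributes exactly $\mathfrak{h}_3\oplus\R$. The only (harmless) difference is that you make the non-K\"ahler condition explicit via the Lee form, where the paper phrases it as ``$a=0$ implies K\"ahler.''
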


\begin{proof}
We divide the discussion depending on whether \eqref{LCK1} or \eqref{LCK2} holds. Assume \eqref{LCK2} holds. In particular, we note that $A$ satisfies $[A,A^t]=0$ and its eigenvalues have real part equal to $\lambda$. Necessarily we have $\lambda=\frac{\operatorname{tr} A}{2(n-1)}$. Since we are assuming there exists a closed $(n,0)$-form, by Proposition \ref{closed(n,0)} we have $a+ \frac{1}{2} \operatorname{tr} A=0$, which implies $\lambda=-\frac{a}{n-1}$.

Therefore, by the aforementioned characterization of SKT almost abelian Lie algebras, we obtain that $(\mathfrak{g}(a,v,A),J,g)$ is SKT if and only if $n=3$, for which we have $\lambda=-\frac{a}{2}$, or if $a=0$, which implies that the structure is K\"ahler, in particular. We also note that, by Proposition \ref{closed(n,0)}, if $\mathfrak{g}$ is unimodular, one obtains again $a=0$, implying the K\"ahler condition.

Now assume \eqref{LCK1} holds. It is easy to see that these conditions also imply that the Hermitian almost abelian Lie algebra is SKT (by \cite{AL}) and admits a closed $(2,0)$-form (by Proposition \ref{closed(n,0)}) if and only if $a=0$, concluding the proof.
\end{proof}

\section{Balanced metrics} \label{balancedmetrics}

Recall that a Hermitian metric is called \emph{balanced} if and only if $d^*\omega=0$, or equivalently if its Lee form  $\theta$ vanishes.

\begin{theorem} \label{Bal_avA}
A Hermitian almost abelian Lie algebra $(\mathfrak{g}(a,v,A),J,g)$  is balanced if and only if $v=0$, $\operatorname{tr} A=0$. For a balanced almost abelian Lie algebra $(\mathfrak{g}(a,v,A),J,g)$ the Bismut-Ricci form  is given  by 
\begin{equation} \label{rho_balanced}
\rho^B= \rho^C= -a^2\,e^1 \wedge e^{2n}.
\end{equation}
In particular,  a  unimodular   almost abelian  Lie algebra  admitting balanced metrics  is decomposable and its Bismut-Ricci form   $\rho^B$ vanishes.
\end{theorem}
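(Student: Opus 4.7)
The proof splits into three clean pieces, and essentially every step is a substitution into a formula already available in the paper.

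First, for the characterization of balanced structures I would invoke Lemma 3.1 directly. Since $(J,g)$ is balanced if and only if the Lee form $\theta$ vanishes, and \eqref{Lee_avA} gives $\theta=(Jv)^{\flat}-(\operatorname{tr}A)\,e^{2n}$ with the two summands lying in orthogonal subspaces ($(Jv)^{\flat}\in\mathfrak{n}_1^{*}$ and $e^{2n}\in\mathfrak{k}^{*}$), $\theta=0$ is equivalent to $v=0$ and $\operatorname{tr}A=0$.

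Next, for the Bismut--Ricci form I would plug $v=0$, $\operatorname{tr}A=0$ into the formula \eqref{rhoB} displayed earlier, which collapses immediately to $\rho^{B}=-a^{2}\,e^{1}\wedge e^{2n}$; the same substitution in \eqref{rhoC} gives the identical expression for $\rho^{C}$, consistent with the balanced-case identity $\rho^{B}=\rho^{C}$ coming from \eqref{ChernBismut} and $\theta=0$.

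For the final, unimodular statement, I would combine the balanced condition $\operatorname{tr}A=0$ with the unimodularity condition $a+\operatorname{tr}A=0$, forcing $a=0$. From \eqref{rho_balanced} this immediately gives $\rho^{B}=0$. The structure matrix then reduces to
\[
B=\begin{pmatrix} 0 & 0 \\ 0 & A \end{pmatrix},
\]
so $e_{1}$ lies in the centre of $\mathfrak{g}$, hence $\R e_{1}$ is an ideal. To produce a complementary ideal I would take $\mathfrak{h}\coloneqq\operatorname{span}\langle e_{2},\dots,e_{2n}\rangle$; the brackets $[e_{i},e_{j}]=0$ for $i,j\in\{2,\dots,2n-1\}$ (since $\mathfrak{n}$ is abelian) and $[e_{2n},e_{i}]=A e_{i}\in\mathfrak{n}_{1}\subset\mathfrak{h}$ for $i\in\{2,\dots,2n-1\}$ show $\mathfrak{h}$ is a subalgebra, and centrality of $e_{1}$ upgrades it to an ideal. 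Hence $\mathfrak{g}=\R e_{1}\oplus\mathfrak{h}$ as a direct sum of ideals, proving decomposability.

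There is really no hard step here: once Lemma 3.1 is available, the balanced characterization is immediate, the curvature formulas reduce by substitution, and the only small thing to verify is that $\mathfrak{h}$ is closed under brackets, which follows at once from the abelianness of $\mathfrak{n}$ and the block form of $B$. The mild subtlety worth flagging is the orthogonal splitting used in reading off $v=0$ and $\operatorname{tr}A=0$ separately from $\theta=0$; otherwise the argument is essentially an unpacking of the data $(a,v,A)$.
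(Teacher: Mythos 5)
Your proposal is correct and follows essentially the same route as the paper: read off $v=0$, $\operatorname{tr}A=0$ from the vanishing of the Lee form in Lemma \ref{leeform}, substitute into \eqref{rhoB} and \eqref{rhoC}, and in the unimodular case deduce $a=0$ so that $\mathfrak{g}$ splits as $\R e_1\oplus(\mathfrak{n}_1\oplus\mathfrak{k})$ — exactly the decomposition the paper uses, which you merely spell out in more detail by verifying that $e_1$ is central and that the complement is an ideal.
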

\begin{proof} First, note that a Hermitian almost abelian Lie algebra $(\mathfrak{g}(a,v,A),J,g)$ satisfying $a=0$, $v=0$ is decomposable as the direct sum of two Lie algebras $J\mathfrak{k}$ and $\mathfrak{n}_1 \oplus \mathfrak{k}$.
The first part of the claim readily follows from Corollary \ref{leeform}. Together with \eqref{rhoB} and \eqref{rhoC}, this implies \eqref{rho_balanced}.

Now, if $\mathfrak{g}$ is unimodular, we also have 
\[
0=\operatorname{tr} \text{ad}_{e_{2n}}=a+\operatorname{tr} A=a,
\]
from which the final part of the claim follows.
\end{proof}

\begin{remark} \label{trAcondition}
Note that $\mathfrak{n}_1$ does not depend on $g$ and, for any $X_1,X_2 \in \mathfrak{g}$ transverse to $\mathfrak{n}$, the two endomorphisms $\text{ad}_{X_i}\rvert_{\mathfrak{n}_1}$, $i=1,2$, differ only by a multiplicative constant, since one can uniquely decompose $X_2=cX_1+Y$, $c \neq 0$, $Y \in \mathfrak{n}$, so that $\text{ad}_{X_2}\rvert_{\mathfrak{n}_1}=c\,\text{ad}_{X_1}\rvert_{\mathfrak{n}_1}$. In this way, it is clear that the condition $\operatorname{tr} A =0$ does not depend on the metric $g$ but only on the complex structure $J$, since it is equivalent to $\operatorname{tr} \text{ad}_{X}\rvert_{\mathfrak{n}_1}=0$ for any $X \in \mathfrak{g}$.
\end{remark}

Along the same lines, we can prove the following proposition, which is the exact analogous of \cite[Proposition 6.1]{FPS} in the almost abelian setting.

\begin{proposition} \label{BismutRicci_bal}
The Bismut connection of a Hermitian structure  $(J, g)$ on a unimodular almost abelian Lie algebra $\mathfrak{g}$ has holonomy contained in $\text{\normalfont SU}(n)$ if and only if  the Hermitian metric $g$  is balanced.
\end{proposition}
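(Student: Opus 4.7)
The plan is to reduce the holonomy condition $\text{Hol}^0(\nabla^B) \subseteq \text{SU}(n)$ to the vanishing of the Bismut-Ricci form $\rho^B$, as already recalled in the introduction in the discussion of Calabi-Yau with torsion structures. Once this is done, the proof becomes a direct comparison between the explicit formula \eqref{rhoB} for $\rho^B$ and the characterization of the balanced condition provided by Theorem \ref{Bal_avA}, leveraging the additional constraint $a+\operatorname{tr}A=0$ coming from unimodularity.

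The easy direction is essentially already packaged into Theorem \ref{Bal_avA}: if $(J,g)$ is balanced then $v=0$ and $\operatorname{tr}A=0$, and unimodularity forces $a=-\operatorname{tr}A=0$. Substituting into \eqref{rho_balanced} yields $\rho^B=-a^2\,e^1\wedge e^{2n}=0$, so nothing further is needed here.

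For the converse, I would simply read off from \eqref{rhoB} what $\rho^B=0$ says in terms of $(a,v,A)$. The key observation is that $A^t v$ lies in $\mathfrak{n}_1$, so $(A^t v)^\flat \wedge e^{2n}$ lies in $\operatorname{span}\langle e^j\wedge e^{2n}: 2\le j\le 2n-1\rangle$, a subspace of $\Lambda^2\mathfrak{g}^*$ transverse to $e^1\wedge e^{2n}$. Hence the two summands in \eqref{rhoB} must vanish separately, giving
\[
a^2-\tfrac{1}{2}a\operatorname{tr}A+\lVert v\rVert^2=0 \quad\text{and}\quad A^tv=0.
\]
Substituting the unimodularity relation $\operatorname{tr}A=-a$ into the scalar equation turns it into $\tfrac{3}{2}a^2+\lVert v\rVert^2=0$, which forces $a=0$ and $v=0$; unimodularity then also yields $\operatorname{tr}A=0$, so Theorem \ref{Bal_avA} delivers the balanced conclusion.

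There is no real obstacle here once \eqref{rhoB} is in hand. The only point worth flagging is that the scalar constraint becomes positive definite (and therefore forces all the relevant algebraic data to vanish) precisely because of unimodularity; without that hypothesis the equation would collapse to a single linear relation among $a$, $\operatorname{tr}A$ and $\lVert v\rVert$ and would not suffice to conclude balancedness — indicating that the unimodular hypothesis is genuinely used and not cosmetic.
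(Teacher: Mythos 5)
Your proof is correct and follows essentially the same route as the paper: both reduce the holonomy condition to $\rho^B=0$, substitute the unimodularity relation $a+\operatorname{tr}A=0$ into \eqref{rhoB} to obtain the positive-definite scalar constraint $\tfrac{3}{2}a^2+\lVert v\rVert^2=0$ (the paper writes it as $\tfrac{3}{2}(\operatorname{tr}A)^2+\lVert v\rVert^2$), and conclude via Theorem \ref{Bal_avA}. The only addition in the paper is a closing remark, citing \cite{WYZ}, that a non-K\"ahler balanced structure is never Bismut flat, which is not needed for the equivalence itself.
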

\begin{proof}

Let us fix an adapted unitary basis $\{e_1,\ldots,e_{2n}\}$ for $(\mathfrak{g},J,g)$, determining the algebraic data $(a,v,A)$ by \eqref{B}, with $a+\operatorname{tr} A=0$.
By \eqref{rhoB}, the Bismut-Ricci form reduces to
\[
\rho^B=-\left(\tfrac{3}{2}(\operatorname{tr} A)^2+\lVert v \rVert^2 \right) e^1 \wedge e^{2n} - \left(A^tv\right)^\flat \wedge e^{2n},
\]
Thus, its vanishing is equivalent to $a=\operatorname{tr}A=v=0$. By Theorem \ref{Bal_avA}, these conditions correspond precisely to the balanced condition in the unimodular case.
	
Moreover, we recall that, by \cite[Theorem 2]{WYZ}, a non-K\"ahler balanced Hermitian structure is never Bismut flat.
\end{proof}
\begin{remark} Notice that the previous proposition is not true in the non-unimodular case: for example, the six-dimensional non-balanced Hermitian almost abelian Lie algebra $(\mathfrak{g}(a,v,A),J,g)$ determined by the algebraic data
\[
a=1,\quad v=\left( \begin{smallmatrix} 0 \\ 1  \\ 0 \\ 0 \end{smallmatrix} \right), \quad A=\left( \begin{smallmatrix} 2 &0&0&0 \\ 0&0&0&0 \\ 0&0&0&0 \\ 0&0&0&2 \end{smallmatrix} \right),
\]
with respect to a unitary basis $\{e_1,\ldots,e_6\}$ adapted to the splitting $\mathfrak{g}=J\mathfrak{k} \oplus \mathfrak{n}_1 \oplus \mathfrak{k}$, $Je_i=e_{7-i}$, $i=1,2,3$,
has vanishing Bismut-Ricci form (and it is not Bismut flat), as shown by a direct computation.
\end{remark}

\begin{remark} \label{exact}
Special balanced structures are those for which $\omega^{n-1}$ is $d$-exact, or even $\partial \overline \partial$-exact. These stronger conditions can never be satisfied by a Hermitian structure on an almost abelian Lie algebra. In the notation we have introduced, the exactness of $\omega^{n-1}$ would imply $\omega^{n-1} \in \Lambda^{2n-3}\mathfrak{n}^* \wedge \mathfrak{k}^*$, so that $\omega^{n-1}\rvert_{\mathfrak{n}_1}=0$, which contradicts the non-degeneracy of $\omega\rvert_{\mathfrak{n}_1}$. 

Instead, as shown in \cite{Yac}, every Hermitian metric on a unimodular complex Lie algebra is such that $\omega^{n-1}$ is $\partial \overline \partial$-exact.
\end{remark}

It was conjectured in \cite{FV} that a compact complex manifold admitting both an SKT metric and a balanced metric necessarily admits a K\"ahler metric. 
We show that the conjecture holds for compact almost abelian solvmanifolds endowed with a left-invariant complex structure.
\begin{theorem} 
Let $\mathfrak{g}$ be an almost abelian Lie algebra endowed with a complex structure $J$. Assume $(\mathfrak{g},J)$ admits an SKT metric and a balanced metric. Then it admits a K\"ahler metric as well.
\end{theorem}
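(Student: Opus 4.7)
The plan is to use the two given metrics to pin down the intrinsic Lie-algebraic data of $(\mathfrak{g},J)$ and then to construct a K\"ahler metric by hand.

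First I would apply Theorem \ref{Bal_avA} to the balanced metric $g_1$: this yields an adapted unitary basis $\{e_1,\ldots,e_{2n}\}$ with associated triple $(a,0,A_1)$ and $\operatorname{tr} A_1 = 0$. The vanishing of $v_1$ says precisely that $L := \R e_1$ is an $\operatorname{ad}$-invariant complement of $\mathfrak{n}_1$ in $\mathfrak{n}$. By Remark \ref{trAcondition} the condition $\operatorname{tr} A = 0$ depends only on $J$, so it also holds for the data $(a_2,v_2,A_2)$ of the SKT metric $g_2$ in any $g_2$-adapted basis.

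Next I would invoke the SKT characterization from \cite{AL} for $g_2$: $[A_2,A_2^t]=0$ and every eigenvalue of $A_2$ has real part $0$ or $-a_2/2$. Writing $k$ for the multiplicity of eigenvalues with real part $-a_2/2$, the real equation $\operatorname{tr} A_2 = 0$ gives $-k a_2/2 = 0$, so either $a_2 = 0$ or $k=0$, and in both alternatives every eigenvalue of $A_2$ is purely imaginary. Since a real normal matrix with purely imaginary spectrum is skew-symmetric (its orthogonal real canonical form consists of zero $1\times 1$ blocks and purely skew $2 \times 2$ blocks), we obtain $A_2 \in \mathfrak{u}(\mathfrak{n}_1,J_1,g_2)$. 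Because $\mathfrak{n}$ is abelian, $\operatorname{ad}_X|_{\mathfrak{n}_1}$ for different transverse $X$ differ only by a nonzero scalar, so $A_1$ is proportional to $A_2$ as an endomorphism of $\mathfrak{n}_1$, and in particular is itself $g_2|_{\mathfrak{n}_1}$-skew-symmetric.

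I would then define a new $J$-Hermitian metric $g$ on $\mathfrak{g}$ by setting $g|_{\mathfrak{n}_1} := g_2|_{\mathfrak{n}_1}$, declaring $\R e_1$, $\mathfrak{n}_1$ and $\R e_{2n}$ to be mutually $g$-orthogonal, and normalizing $g(e_1,e_1)=g(e_{2n},e_{2n})=1$. A direct check shows that the $g$-adapted basis has the same $e_1,e_{2n}$ and any $g_2|_{\mathfrak{n}_1}$-orthonormal basis of $\mathfrak{n}_1$ in the middle, producing data $(a,0,A')$ with $A' \in \mathfrak{u}(\mathfrak{n}_1,J_1,g)$. Such a Hermitian structure is LCK by \eqref{LCK2} (with $\lambda=0$) and balanced by Theorem \ref{Bal_avA}; the two conditions together force the Lee form to vanish, and from $d\omega^{n-1}=\theta\wedge\omega^{n-1}$ combined with the LCK relation $d\omega=\tfrac{1}{n-1}\theta\wedge\omega$ one obtains $d\omega=0$, so $g$ is K\"ahler.

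The main hinge is the spectral step forcing $A_2$ to be skew-symmetric, which combines the SKT eigenvalue dichotomy with the purely $J$-invariant (metric-free) trace constraint coming from the balanced hypothesis; once this rigidity is extracted, transporting $g_2|_{\mathfrak{n}_1}$ along the $\operatorname{ad}$-invariant splitting $\R e_1\oplus\mathfrak{n}_1\oplus\R e_{2n}$ produced by the balanced metric delivers the K\"ahler metric essentially for free.
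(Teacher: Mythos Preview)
Your proof is correct and follows essentially the same strategy as the paper's: extract from the balanced metric the $\operatorname{ad}$-invariant line $\mathbb{R}e_1$ in $\mathfrak{n}$, extract from the SKT metric (together with the metric-independent condition $\operatorname{tr} A=0$) the skew-symmetry of $\operatorname{ad}|_{\mathfrak{n}_1}$ with respect to $g_2|_{\mathfrak{n}_1}$, and then glue a new Hermitian metric from these two ingredients. The only cosmetic differences are that you prove the skew-symmetry step directly from the spectral data (normal with purely imaginary spectrum) rather than citing \cite[Corollary~4.6]{AL}, and that you conclude K\"ahler via the LCK\,$+$\,balanced route instead of invoking the K\"ahler characterization of \cite{LV} directly.
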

\begin{proof}
Denote the SKT metric by $g$ and the balanced metric by $g^\prime$.
Fix a $(J,g)$-unitary adapted basis $\{e_1,\ldots,e_{2n}\}$, so that, by \cite{AL}, the corresponding data $(a,v,A)$ in \eqref{B} is such that $[A,A^t]=0$ and the real part of the eigenvalues of $A$ is equal to either $-\frac{a}{2}$ or $0$. The existence of a $J$-Hermitian balanced metric now forces $\operatorname{tr} A=0$ (recall Remark \ref{trAcondition}), so by \cite[Corollary 4.6]{AL} $A$ should be skew-symmetric with respect to the metric $g$.

Now, let $\{e_1^\prime,\ldots,e_{2n}^\prime\}$ be a $(J,g^\prime)$-unitary adapted basis, with associated data $(a^\prime,v^\prime,A^\prime)$. By Theorem \ref{Bal_avA}, we have in particular $v^\prime=0$, which means that $\text{ad}_{e_{2n}^\prime}$ preserves the line generated by $e_1^\prime=-Je_{2n}^\prime$.

Consider now the metric
\[
g^{\prime \prime}=g\rvert_{\mathfrak{n}_1} + g^\prime\rvert_{\text{span}\left<e_1^\prime,e_{2n}^\prime\right>}.
\]
Then $g^{\prime \prime}$ is $J$-Hermitian since it is the orthogonal sum of two $J$-Hermitian metrics on two $J$-invariant subspaces. We have that $\{e_1^\prime,e_2,\ldots,e_{2n-1},e_{2n}^\prime\}$ is a $(J,g^{\prime \prime})$-unitary adapted basis such that the matrix $B^{\prime \prime}$ associated with $\text{ad}_{e_{2n}^\prime}\rvert_{\mathfrak{n}}$ is of the form
\[
B^{\prime \prime}=\begin{pmatrix} a^\prime & 0 \\ 0 & A^{\prime \prime} \end{pmatrix},
\]
where $A^{\prime \prime}$ is a multiple of $A$ (by Remark \ref{trAcondition}), hence skew-symmetric with respect to $g$ and $g^{\prime \prime}$, since the two metrics coincide on $\mathfrak{n}_1$. From the characterization of K\"ahler almost abelian Lie algebras (see \cite{LW}) it follows that $g^{\prime \prime}$ is a K\"ahler metric.
\end{proof}

Recall that, by the symmetrization process described in \cite{Bel, FG, Uga}, the existence of an SKT (resp.\ balanced) metric on a compact complex solvmanifold $(\Gamma \backslash G,J)$ implies the existence of an invariant SKT (resp.\ balanced) metric on $(\Gamma \backslash G,J)$.

\begin{corollary}
Let $\Gamma \backslash G$ be an almost abelian solvmanifold endowed with a left-invariant complex structure $J$. Assume $(\Gamma \backslash G,J)$ admits an SKT metric and a balanced metric. Then it admits a K\"ahler metric as well.
\end{corollary}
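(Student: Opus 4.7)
The plan is to reduce the corollary to the preceding theorem via the symmetrization process. Since $G$ is an almost abelian Lie group, its Lie algebra $\mathfrak{g}$ is almost abelian, and a left-invariant complex structure $J$ on $\Gamma \backslash G$ corresponds to a complex structure on $\mathfrak{g}$ which we denote by the same symbol. Any left-invariant Hermitian metric on $(\Gamma \backslash G, J)$ likewise corresponds to a Hermitian inner product on $(\mathfrak{g}, J)$, and vice versa.

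First I would invoke the symmetrization principle recalled just before the corollary (\cite{Bel, FG, Uga}): because the compact complex solvmanifold $(\Gamma \backslash G, J)$ admits a SKT metric, it admits a \emph{left-invariant} SKT metric, and similarly it admits a left-invariant balanced metric. Restricting these to the Lie algebra yields, on $(\mathfrak{g}, J)$, both a SKT Hermitian inner product and a balanced Hermitian inner product.

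Next, I would apply the preceding Theorem to the almost abelian Lie algebra $(\mathfrak{g}, J)$: since $(\mathfrak{g}, J)$ carries both a SKT metric and a balanced metric, it carries a K\"ahler metric $g''$. This $g''$ is a $J$-Hermitian inner product on $\mathfrak{g}$ with $d\omega'' = 0$, where $\omega''$ is its fundamental form.

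Finally, I would extend $g''$ by left-translation to a left-invariant Riemannian metric on $G$, which descends to $\Gamma \backslash G$ because $\Gamma$ acts by left translations. The extended metric is $J$-Hermitian since $J$ is left-invariant, and its fundamental form is closed as a left-invariant form iff it is closed on $\mathfrak{g}$. Hence $g''$ induces a K\"ahler metric on $(\Gamma \backslash G, J)$. There is no substantive obstacle here — the corollary is essentially a bookkeeping consequence of the theorem combined with symmetrization; the only thing to be careful about is matching the invariance conventions so that the Lie-algebra-level K\"ahler metric genuinely descends to the quotient, which is automatic for left-invariant tensors on a solvmanifold $\Gamma \backslash G$.
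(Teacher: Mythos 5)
Your proposal is correct and follows exactly the route the paper intends: the corollary is stated immediately after the symmetrization remark precisely so that it reduces to the preceding theorem at the Lie algebra level, and the resulting left-invariant K\"ahler metric descends to the quotient as you describe. No gaps.
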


We note that there are plenty of compact K\"ahler solvmanifolds. By \cite{Has}, these are all finite quotients of complex tori which have the structure of a complex torus bundle over a complex torus.

\section{Classification in  dimension six} \label{6-dim}

Six-dimensional nilpotent Lie algebras carrying balanced structures were classified in \cite{Uga}: among them, apart from the abelian Lie algebra $\R^6$, the only one which is almost abelian has structure equations
\begin{equation} \label{nilp_almostabelian}
(0,0,0,0,f^{12},f^{13}),
\end{equation}
up to isomorphism, where the notation we adopt means that the Lie algebra \eqref{nilp_almostabelian} admits a basis $\{f_1,\ldots,f_6\}$ whose dual basis $\{f^1,\ldots,f^6\}$ satisfies $df^1=df^2=df^3=df^4=0$, $df^5=f^{12}$, $df^6=f^{13}$.

In the next theorem, we classify six-dimensional  (non-K\"ahler) non-nilpotent almost abelian Lie  algebras  admitting a balanced structure.
Six-dimensional non-nilpotent almost abelian Lie algebras admitting a complex structure were classified in \cite[Theorem 3.2]{FP} and can be grouped in classes of isomorphism $\mathfrak{k}_i$, $i=1,\ldots,26$, each depending on some parameters.

In what follows, we have decided to use a less cumbersome notation, with respect to \cite{FP}, for the Lie algebras among that list which admit balanced structures. In two instances, two different families of isomorphism classes in \cite[Theorem 3.2]{FP} have been grouped together, by allowing the parameters to take a wider set of values. As in \cite{FP}, the resulting list is not redundant

In Table \ref{table_cpx}, we also provide explicit examples of balanced structures on those Lie algebras, highlighting the aforementioned correspondence with the list in \cite{FP}.

\begin{theorem} \label{Bal_class}
Let $\mathfrak{g}$ be a six-dimensional non-K\"ahler non-nilpotent almost abelian Lie algebra. Then $\mathfrak{g}$ admits a balanced structure $(J,g)$ if and only if it is isomorphic to one of the following:
\begin{itemize}
\setlength{\itemindent}{-1em}
\item[] $\mathfrak{b}_1=(f^{16},pf^{26},pf^{36},-pf^{46},-pf^{56},0)$,  \,  $p \neq 0$,\smallskip
\item[] $\mathfrak{b}_{2}=(f^{16},f^{36},0,f^{56},0,0)$,\smallskip
\item[] $\mathfrak{b}_{3}=(pf^{16},qf^{26},qf^{36},-qf^{46}+f^{56},-f^{46}-qf^{56},0)$,  \,  $pq \neq 0$, \smallskip
\item[] $\mathfrak{b}_{4}=(pf^{16},qf^{26}+f^{36},-f^{26}+qf^{36},-qf^{46}+sf^{56},-sf^{46}-qf^{56},0)$, \, $pqs \neq 0$,\smallskip
\item[] $\mathfrak{b}_{5}=(pf^{16},f^{36}-f^{46},-f^{26}-f^{56},f^{56},-f^{46},0)$, \,  $p \neq 0$, \smallskip
\item[] $\mathfrak{b}_{6}=(f^{16},f^{26},-f^{36},-f^{46},0,0)$, \smallskip
\item[] $\mathfrak{b}_{7}=(f^{16},f^{26},-f^{36}+rf^{46},-rf^{36}-f^{46},0,0)$, \, $r \neq 0$, \smallskip
\item[] $\mathfrak{b}_{8}=(pf^{16}+f^{26},-f^{16}+pf^{26},-pf^{36}+rf^{46},-rf^{36}-pf^{46},0,0)$, \, $pr \neq 0$, \smallskip
\item[] $\mathfrak{b}_{9}=(f^{26}-f^{36},-f^{16}-f^{46},f^{46},-f^{36},0,0)$.
\end{itemize} 
Among these, only $\mathfrak{b}_{6}$, $\mathfrak{b}_{7}$, $\mathfrak{b}_{8}$ and $\mathfrak{b}_{9}$ are unimodular.
\end{theorem}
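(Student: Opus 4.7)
The proof strategy relies on Theorem \ref{Bal_avA}, which reduces the existence of a balanced Hermitian structure on an almost abelian Lie algebra to finding an adapted unitary basis in which the algebraic data $(a,v,A)$ from \eqref{B} satisfies $v=0$ and $\operatorname{tr} A = 0$. As observed in Remark \ref{trAcondition}, the condition $\operatorname{tr} A = 0$ depends only on $J$, being equivalent to $\operatorname{tr} \operatorname{ad}_X\rvert_{\mathfrak{n}_1} = 0$ for every $X \in \mathfrak{g} \setminus \mathfrak{n}$. The condition $v = 0$ amounts to saying that the $g$-orthogonal complement of $\mathfrak{n}_1$ in $\mathfrak{n}$ is $\operatorname{ad}_{e_{2n}}$-stable; this is possible for some Hermitian metric $g$ if and only if $\operatorname{ad}_{e_{2n}}\rvert_\mathfrak{n}$ admits an eigenvector transverse to $\mathfrak{n}_1$, a purely algebraic condition on the pair $(\mathfrak{g},J)$.

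Granting these reductions, I would proceed by going through the classification in \cite{FP} of $6$-dimensional non-nilpotent almost abelian Lie algebras $\mathfrak{k}_1,\ldots,\mathfrak{k}_{26}$ admitting a complex structure, together with the explicit families of compatible complex structures on each. For each class $\mathfrak{k}_i$ and each family of complex structures I would: (i) read off $\operatorname{ad}_{e_{2n}}\rvert_{\mathfrak{n}_1}$ from the complex structure and test $\operatorname{tr} A = 0$, which already rules out many cases or restricts the parameters; (ii) search for a non-$\mathfrak{n}_1$-valued eigenvector of $\operatorname{ad}_{e_{2n}}\rvert_\mathfrak{n}$, realizing $v = 0$ through a suitable choice of Hermitian metric; (iii) for each surviving case, exhibit a basis $\{f_1,\ldots,f_6\}$ whose structure equations match one of the canonical forms $\mathfrak{b}_1,\ldots,\mathfrak{b}_9$, after normalizing by the residual $\mathrm{U}(n{-}1) \times \mathrm{U}(1)$ action on adapted unitary bases; (iv) discard Kähler cases, using the characterization in \cite{LV} by which $(\mathfrak{g},J,g)$ is Kähler if and only if $a = 0$, $v = 0$ and $A \in \mathfrak{u}(\mathfrak{n}_1,J_1,g)$.

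The unimodularity statement then follows immediately: under the balanced reduction, $\operatorname{tr} \operatorname{ad}_{e_{2n}} = a + \operatorname{tr} A = a$, so unimodularity is equivalent to $a = 0$, and inspection of the nine families shows this holds precisely for $\mathfrak{b}_6, \mathfrak{b}_7, \mathfrak{b}_8, \mathfrak{b}_9$. The main obstacle is not conceptual but combinatorial: one must traverse all $26$ classes with their parameter families, and for each survivor carry out the normal-form reduction and verify that the resulting $\mathfrak{b}_i$, on the parameter ranges listed, represent pairwise non-isomorphic Lie algebras (for instance by distinguishing them via the spectrum of $\operatorname{ad}_{e_6}$, its Jordan structure, and the dimension of the nilradical). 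Once this bookkeeping is in place, each individual verification is a short linear-algebra computation using \eqref{B}.
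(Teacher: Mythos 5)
Your proposal is correct and follows essentially the same route as the paper: reduce via Theorem \ref{Bal_avA} to the conditions $v=0$, $\operatorname{tr}A=0$ and then run through the classification of complex structures on $6$-dimensional non-nilpotent almost abelian Lie algebras from \cite{FP}. The only organizational difference is that the paper traverses the five normal forms for $A$ given in \cite[Theorem 3.2]{FP} rather than the $26$ classes $\mathfrak{k}_i$, splitting cases by the vanishing of $a$ and the remaining parameter and then discarding the K\"ahler and nilpotent cases; your metric-independent reformulation of $v=0$ as the existence of an $\operatorname{ad}_{e_{2n}}$-eigenvector transverse to $\mathfrak{n}_1$ is a valid shortcut that the paper leaves implicit.
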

\begin{proof}
Let $(J,g)$ be a  balanced structure on $\mathfrak{g}$. Let $\{e_1,\ldots,e_6\}$ be a unitary basis of $(\mathfrak{g},J,g)$ adapted to the splitting $\mathfrak{g}=J\mathfrak{k} \oplus \mathfrak{n}_1 \oplus \mathfrak{k}$, so that the matrix $B$ associated with $\text{ad}_{e_6}\rvert_{\mathfrak{n}}$ is of the form \eqref{B}, with the additional conditions $v=0$, $\operatorname{tr} A=0$, by Theorem \ref{Bal_avA}, i.e.,
\begin{equation} \label{B_1}
B= \begin{pmatrix} a & 0 \\ 0 & A \end{pmatrix}, \quad a \in \R,\,A \in \mathfrak{gl}(\mathfrak{n}_1,J_1),\, \operatorname{tr}A=0.
\end{equation}
Now, following the approach of the proof of \cite[Theorem 3.2]{FP}, the crucial step in order to obtain a nice representative for the isomorphism class of $\mathfrak{g}$ is to obtain a canonical matrix form for the endomorphism $A$. By focusing on the condition $[A,J_1]=0$ for the integrability of $J$ and using the results in the aforementioned proof, one obtains that there must exist a new basis $\{e_2^\prime,e_3^\prime,e_4^\prime,e_5^\prime\}$ for the $J$-invariant subspace $\mathfrak{n}_1$ and possibly a rescaling of $e_6$ such that the endomorphism $\text{ad}_{e_6}\rvert_{\mathfrak{n}_1}$ is now represented by one of the following matrices
\[
	A_1\!=\!\!\left( \begin{smallmatrix} p&0&0&0 \\ 0&p&0&0 \\ 0&0&q&0 \\ 0&0&0&q \end{smallmatrix} \right)\!\!, \;
	A_2\!=\!\!\left( \begin{smallmatrix} p&1&0&0 \\ -1&p&0&0 \\ 0&0&q&0 \\ 0&0&0&q \end{smallmatrix} \right)\!\!, \;
	A_3\!=\!\!\left( \begin{smallmatrix} p&1&0&0 \\ -1&p&0&0 \\ 0&0&q&r \\ 0&0&-r&q \end{smallmatrix} \right)\!\!, \;
	A_4\!=\!\!\left( \begin{smallmatrix} p&1&0&0 \\ 0&p&0&0 \\ 0&0&p&1 \\ 0&0&0&p \end{smallmatrix} \right)\!\!, \;
	A_5\!=\!\!\left( \begin{smallmatrix} p&1&-1&0 \\ -1&p&0&-1 \\ 0&0&p&1 \\ 0&0&-1&p \end{smallmatrix} \right)\!\!,
\]
for some $p,q,r \in \R$, $r \neq 0$ to avoid $A_2$ being a subcase of $A_3$. We still have to impose the traceless condition for $A$, which easily translate in the traceless condition for the matrices $A_1,\ldots,A_5$, since the condition does not change after changes of basis for $\mathfrak{n}_1$ and uniform rescalings. We thus obtain $p=q$ in $A_1$, $q=-p$ in $A_2$ and $A_3$, $p=0$ in $A_4$ and $A_5$. Now, depending on $a$ in \eqref{B_1} and $p$ being zero or non-zero, it is easy to see which of the algebras of \cite[Theorem 3.2]{FP} admit balanced structures. Removing the algebras admitting K\"ahler structures (which were classified in \cite[Theorem 3.8]{FP}) and the nilpotent Lie algebra \eqref{nilp_almostabelian} (which can be obtained from $A_4$, when $a=0$), we have that
\begin{itemize}
\item[] $A_1$ yields $\mathfrak{b}_1$ and $\mathfrak{b}_{6}$,
\item[] $A_2$ yields $\mathfrak{b}_{3}$ and $\mathfrak{b}_{7}$,
\item[] $A_3$ yields $\mathfrak{b}_{4}$ and $\mathfrak{b}_{8}$,
\item[] $A_4$ yields $\mathfrak{b}_{2}$,
\item[] $A_5$ yields $\mathfrak{b}_{5}$ and $\mathfrak{b}_{9}$,
\end{itemize}
from which the claim follows.

The list of eleven Lie algebras in \cite[Theorem 3.2]{FP} reduces to nine, in our notation, since the Lie algebra $\mathfrak{b}_1$ encompasses $\mathfrak{k}_1^{p,-p}$ and $\mathfrak{k}_3^{-1,\frac{1}{p}}$, while $\mathfrak{b}_3$ corresponds to $\mathfrak{k}_8^{p,q,-q}$ or $\mathfrak{k}_9^{q,p,-q}$.
\end{proof}

\begin{table}
\begin{center}
\addtolength{\leftskip} {-2cm}
\addtolength{\rightskip}{-2cm}
\scalebox{0.75}{
\begin{tabular}{|l|l|l|l|l|}
\hline \xrowht{15pt}
Name & Structure equations & Isomorphism with \cite[Theorem 3.2] {FP} & Unimodular & Balanced structure \\
\hline \hline
\xrowht{20pt}
$\mathfrak{b}_1$ & $(f^{16},pf^{26},pf^{36},-pf^{46},-pf^{56},0)$, $p \neq 0$ & \begin{tabular}{@{}l@{}} $\mathfrak{k}_1^{p,-p}$, if $|p| \geq 1$, or \\ $\mathfrak{k}_3^{-1,\frac{1}{p}}$, if $|p| <1$ \\ \end{tabular} & \xmark & \begin{tabular}{@{}l@{}} $Jf_1=f_6$, $Jf_2=f_3$, $Jf_4=f_5$ \\ $g=\sum_{k=1}^6 (f^k)^2$ \\ \end{tabular}  \\ \hline
\xrowht{20pt}
$\mathfrak{b}_2$ & $(f^{16},f^{36},0,f^{56},0,0)$ & $\mathfrak{k}_6^0$ & \xmark & \begin{tabular}{@{}l@{}} $Jf_1=f_6$, $Jf_2=f_4$, $Jf_3=f_5$ \\ $g=\sum_{k=1}^6 (f^k)^2$ \\ \end{tabular} \\ \hline
\xrowht{20pt}
$\mathfrak{b}_3$ & $(pf^{16},qf^{26},qf^{36},-qf^{46}+f^{56},-f^{46}-qf^{56},0)$, $pq \neq 0$ & \begin{tabular}{@{}l@{}} $\mathfrak{k}_8^{p,q,-q}$, if $|p| \geq |q|$, or \\ $\mathfrak{k}_9^{q,p,-q}$, if $|p| < |q|$ \\ \end{tabular} & \xmark & \begin{tabular}{@{}l@{}} $Jf_1=f_6$, $Jf_2=f_3$, $Jf_4=f_5$ \\ $g=\sum_{k=1}^6 (f^k)^2$ \\ \end{tabular} \\ \hline
\xrowht{20pt}
$\mathfrak{b}_4$ & $(pf^{16},qf^{26}+f^{36},-f^{26}+qf^{36},-qf^{46}+sf^{56},-sf^{46}-qf^{56},0)$, $pqs \neq 0$ & $\mathfrak{k}_{11}^{p,q,-q,s}$ & \xmark & \begin{tabular}{@{}l@{}} $Jf_1=f_6$, $Jf_2=f_3$, $Jf_4=f_5$ \\ $g=\sum_{k=1}^6 (f^k)^2$ \\ \end{tabular} \\ \hline
\xrowht{20pt}
$\mathfrak{b}_5$ & $(pf^{16},f^{36}-f^{46},-f^{26}-f^{56},f^{56},-f^{46},0)$, $p \neq 0$ & $\mathfrak{k}_{12}^{p,0}$ & \xmark & \begin{tabular}{@{}l@{}} $Jf_1=f_6$, $Jf_2=f_3$, $Jf_4=f_5$ \\ $g=\sum_{k=1}^6 (f^k)^2$ \\ \end{tabular} \\ \hline
\xrowht{20pt}
$\mathfrak{b}_6$ & $(f^{16},f^{26},-f^{36},-f^{46},0,0)$ & $\mathfrak{k}_{20}^{-1}$ & \cmark & \begin{tabular}{@{}l@{}} $Jf_1=f_2$, $Jf_3=f_4$, $Jf_5=f_6$ \\ $g=\sum_{k=1}^6 (f^k)^2$ \\ \end{tabular} \\ \hline
\xrowht{20pt}
$\mathfrak{b}_7$ & $(f^{16},f^{26},-f^{36}+rf^{46},-rf^{36}-f^{46},0,0)$, $r \neq 0$ & $\mathfrak{k}_{22}^{-1,r}$ & \cmark & \begin{tabular}{@{}l@{}} $Jf_1=f_2$, $Jf_3=f_4$, $Jf_5=f_6$ \\ $g=\sum_{k=1}^6 (f^k)^2$ \\ \end{tabular} \\ \hline
\xrowht{20pt}
$\mathfrak{b}_8$ & $(pf^{16}+f^{26},-f^{16}+pf^{26},-pf^{36}+rf^{46},-rf^{36}-pf^{46},0,0)$, $pr \neq 0$ & $\mathfrak{k}_{25}^{p,-p,r}$ & \cmark & \begin{tabular}{@{}l@{}} $Jf_1=f_2$, $Jf_3=f_4$, $Jf_5=f_6$ \\ $g=\sum_{k=1}^6 (f^k)^2$ \\ \end{tabular} \\ \hline
\xrowht{20pt}
$\mathfrak{b}_9$ & $(f^{26}-f^{36},-f^{16}-f^{46},f^{46},-f^{36},0,0)$ & $\mathfrak{k}_{26}^0$ & \cmark & \begin{tabular}{@{}l@{}} $Jf_1=f_2$, $Jf_3=f_4$, $Jf_5=f_6$ \\ $g=\sum_{k=1}^6 (f^k)^2$ \\ \end{tabular} \\ \hline
\end{tabular}}
\smallskip   
\caption{Six-dimensional (non-K\"ahler) non-nilpotent almost abelian Lie algebras admitting balanced structures.} \label{table_cpx}
\end{center}
\end{table}

\begin{remark}
We observe that some redundancy is present in the list of Lie algebras of Theorem \ref{Bal_class}, namely, within the same family $\mathfrak{b}_k$, different choices of parameters can yield isomorphic Lie algebras. For example, this happens in $\mathfrak{b}_1$ when exchanging $p$ with $-p$.

It is never impossible, though, to have $\mathfrak{b}_k$ isomorphic to $\mathfrak{b}_{k^\prime}$, for $k \neq k^\prime$, for any choice of the parameters involved.
\end{remark}

\begin{remark} \label{lattices_bal}
By \cite[Proposition 7.2.1]{Bock}, the Lie group corresponding to the Lie algebra $\mathfrak{b}_{6}$ \cm{$\mathfrak{k}_{20}^{-1}$} admits compact quotients by lattices. By \cite{Har} (see also \cite{Bock, CM}), the same is true for the Lie groups corresponding to the Lie algebras $\mathfrak{b}_{8}$ \cm{$\mathfrak{k}_{25}^{p,-p,r}$} (for some values of the parameters $p,r$) and $\mathfrak{b}_{9}$\cm{$\mathfrak{k}_{26}^0$}. As we shall now show, none of the Lie groups with Lie algebra  $\mathfrak{b}_{7}$ \cm{$\mathfrak{k}_{22}^{-1,r}$} admit compact quotients by lattices: following \cite{Bock}, a co-compact lattice exists on such Lie groups if and only if there exists a non-zero $t_0 \in \R$ and a basis of $\mathfrak{n}$ such that the matrix associated with $\text{exp}(t_0\text{ad}_{f_6})\rvert_{\mathfrak{n}}$ has integer entries. In the basis $\{f_1,\ldots,f_5\}$, one computes
\begin{equation} \label{exptad}
\text{exp}(t\,\text{ad}_{f_6})\rvert_{\mathfrak{n}}=
\left(\begin{smallmatrix}
e^t & 0 & 0 & 0 & 0 \\
0 & e^t & 0 & 0 & 0 \\
0 & 0 & e^{-t}\cos(tr) & e^{-t} \sin(tr) & 0 \\
0 & 0 & -e^{-t}\sin(tr) & e^{-t} \cos(tr) & 0 \\
0 & 0 & 0 & 0 & 1
\end{smallmatrix}\right),\quad t \in \R,
\end{equation}
having minimal polynomial $p_t(\lambda)=\sum_{i=0}^4 a_i(t,r) \lambda^i$, with
\begin{gather*}
a_0=e^{-t},\quad a_1=- 2\cos(tr)-e^{-t}-e^{-2t},\quad a_2=2\cos(tr)+e^{t} +2\cos(tr)e^{-t}+e^{-2t}, \\a_3= - 1-2e^{-t}\cos(tr)-e^t,\quad a_4=1.
\end{gather*}
If \eqref{exptad} is conjugate to an integer matrix for some $t_0$, then necessarily $p_{t_0}(\lambda)$ is an integer polynomial, so that $a_0(t_0,r) \in \mathbb{Z}$ forces $t_0=-\log(k)$, for some $k \in \mathbb{Z}_{>0}$. Together with $a_1(t_0,r) \in \mathbb{Z}$, this implies $2\cos(r\log(k))=h \in \mathbb{Z}$, and now $a_3(t_0,r)=-hk-1-\frac{1}{k}$ is integer if and only if $k=1$, that is, $t_0=0$, a contradiction.
\end{remark}

\begin{example}
Recall Proposition \ref{BismutRicci_bal}. By Theorem \ref{Bal_class} and Remark \ref{lattices_bal}, we have that the simply connected Lie groups with Lie algebra $\mathfrak{b}_{6}$\cm{$\mathfrak{k}_{20}^{-1}$}, $\mathfrak{b}_{8}$\cm{$\mathfrak{k}_{25}^{p,-p,r}$}, and $\mathfrak{b}_{9}$ \cm{$\mathfrak{k}_{26}^0$} have compact quotients by lattices admitting left-invariant Hermitian structures satisfying $\text{Hol}^0(\nabla^B) \subseteq \text{SU}(3)$. In particular, we now show that they admit balanced Hermitian structures such that $\text{Hol}^0(\nabla^B)=\text{SU}(3)$.

Consider the Hermitian structure $(g,J)$ defined by
\[
g=\sum_{i=1}^6 (f^i)^2,\quad Jf_1=f_2,\,Jf_3=f_4,\,Jf_5=f_6.
\]
Consider the curvature $2$-forms $\Omega^i_j$ associated with the Bismut connection $\nabla^B$ of $(g,J)$, $R^{B}=\Omega^i_j \, f^j \otimes f_i$. On all three algebras, these span an $8$-dimensional space, proving our claim (see \cite{DFFU}): in particular, a set of eight linearly independent curvature $2$-forms is provided by
\begin{alignat*}{4}
\Omega^1_2&=-2f^{12}, \quad&
\Omega^1_3&=f^{13}+f^{24}, \quad&
\Omega^1_4&=f^{14}-f^{23}, \quad&
\Omega^1_5&=-f^{15}-f^{26}, \\
\Omega^2_5&=f^{16}-f^{25}, \quad&
\Omega^3_4&=-2f^{34}, \quad&
\Omega^3_5&=-f^{35}-f^{46}, \quad&
\Omega^4_5&=f^{36}-f^{45},
\end{alignat*}
on $\mathfrak{b}_{6}$\cm{$\mathfrak{k}_{20}^{-1}$}, by
\begin{alignat*}{4}
\Omega^1_2&=-2p^2f^{12}, \quad&
\Omega^1_3&=p^2f^{13}+p^2f^{24}, \quad&
\Omega^1_4&=p^2f^{14}-p^2f^{23}, \quad&
\Omega^1_5&=-p^2f^{15}-p^2f^{26}, \\
\Omega^2_5&=p^2f^{16}-p^2f^{25}, \quad&
\Omega^3_4&=-2p^2f^{34}, \quad&
\Omega^3_5&=-p^2f^{35}-p^2f^{46}, \quad&
\Omega^4_5&=p^2f^{36}-p^2f^{45},
\end{alignat*}
on $\mathfrak{b}_{8}$ and by
\begin{alignat*}{4}
\Omega^1_2&=-\tfrac{1}{4}f^{34}-\tfrac{1}{4}f^{56}, \quad&
\Omega^1_3&=\tfrac{1}{4}f^{13}+\tfrac{1}{4}f^{24}, \quad&
\Omega^1_4&=-\tfrac{1}{4}f^{14}+\tfrac{1}{4}f^{23}, \quad&
\Omega^1_5&=-\tfrac{1}{4}f^{15}+\tfrac{1}{4}f^{26}, \\
\Omega^2_5&=-\tfrac{1}{4}f^{16}-\tfrac{1}{4}f^{25}, \quad&
\Omega^3_4&=-\tfrac{1}{2}f^{12}+\tfrac{1}{2}f^{56}, \quad&
\Omega^3_5&=-\tfrac{1}{4}f^{35}-\tfrac{3}{4}f^{46}, \quad&
\Omega^4_5&=\tfrac{3}{4}f^{36}-\tfrac{1}{4}f^{45},
\end{alignat*}
on $\mathfrak{b}_{9}$\cm{$\mathfrak{k}_{26}^0$}.

In addition, note that the $(3,0)$-form $\Psi=(f^1+if^2)\wedge(f^3+if^4)\wedge(f^5+if^6)$ is closed on $\mathfrak{b}_{6}$ and $\mathfrak{b}_{8}$ ($r=1$), so that the corresponding compact quotients can be endowed with a balanced $\text{\normalfont SU}(3)$-structure (recall the definition in \cite[Section 1]{FTUV}) satisfying $\text{Hol}^0(\nabla^B)=\text{SU}(3)$.
\end{example}

\section{Balanced flow} \label{sec_bal}

In \cite{BV}, the authors introduced a parabolic flow for Hermitian metrics on a complex manifold, preserving the balanced condition of the initial data: 
in terms of the $(n-1)$-st power of the fundamental form, the evolution equation reads
\begin{equation} \label{flowBV}
\tfrac{\partial}{\partial t}\varphi(t)=i \partial \overline\partial *_t \left(\rho_t^C \wedge *_t \varphi(t)\right) +\Delta_{BC} \varphi(t),\quad \varphi(0)=\varphi_0=*_0 \omega_0.
\end{equation}
In terms of $\omega(t)$ (which, hereafter, we identify with the evolving metric $g(t)$), equivalently we have
\begin{equation} \label{flowBV2}
\tfrac{\partial}{\partial t}\omega(t)=(n-2)!\,\iota_{\omega(t)^{n-2}}\left(i\partial \overline{\partial} *_t \left(\rho_t^C \wedge \omega(t) \right) \right) + \frac{1}{n-1} \, \iota_{\omega(t)^{n-2}} \Delta_{BC} \omega(t)^{n-1},\quad \omega(0)=\omega_0,
\end{equation}
where $*_t$ is the Hodge star operator associated with $\omega(t)$, $\rho^C_t$ is the Chern-Ricci form of $\omega(t)$ and $\Delta_{BC}$ is the modified Bott-Chern Laplacian
\[
\Delta_{B C}=\partial \overline{\partial \partial}^{*} \partial^{*}+\bar{\partial}^{*} \partial^{*} \partial \bar{\partial}+\bar{\partial}^{*} \partial \partial^{*} \bar{\partial}+\partial^{*} \overline{\partial \partial}^{*} \partial+\bar{\partial}^{*} \bar{\partial}+\partial^{*} \partial
\]
associated with $\omega(t)$. The flow \eqref{flowBV} differs from  (1.3) in \cite{BV} by a factor for the second summand in the right-hand side, as suggested by the authors \cite{BV2}. We shall refer to this flow as \emph{balanced flow} and we shall denote the right-hand side of \eqref{flowBV2} by $q(\omega(t))$ for simplicity.  As shown in \cite{BV}, \eqref{flowBV} admits a unique solution defined on some maximal interval $[0,\varepsilon)$, with $[\varphi(t)]=[\varphi_0] \in H_{BC}^{2n-2}={\ker d}/{\operatorname{Im} \partial \overline\partial}$ for all $t \in [0,\varepsilon)$. In particular, the balanced flow preserves the $\partial \overline \partial$-exactness of $\omega^{n-1}_0$ (see Remark \ref{exact}).

When the initial data is K\"ahler, the balanced flow reduces to the \emph{Calabi flow} (see \cite{Cal, Cal2}),
\begin{equation} \label{calabiflow}
	\tfrac{\partial}{\partial t} \omega(t)=i\partial \overline\partial s_{\omega(t)},\quad \omega(0)=\omega_0,
\end{equation}
$s_{\omega(t)}$ denoting the scalar curvature of $\omega(t)$, or equivalently
\begin{equation} \label{calabiflow2}
	\tfrac{\partial}{\partial t}\varphi(t)=i \partial \overline\partial *_t \left(\rho_t \wedge *_t \varphi(t)\right),\quad \varphi(0)=\varphi_0=*_0 \omega_0,
\end{equation}
$\rho_t$ being the Ricci form associated with $\omega(t)$.

Consider a simply connected  almost abelian Lie group $G$ endowed with a left-invariant Hermitian structure $(J,g_0,\omega_0)$ and let $\mathfrak{g}$ be its Lie algebra. If one assumes that the solution to the balanced flow remains left-invariant for all times, then \eqref{flowBV} and \eqref{flowBV2} reduce to systems of \textsc{ode}s on the Lie algebra $\mathfrak{g}$, which admit a unique solution. By the uniqueness of solutions of the balanced flow, one obtains that left-invariant initial data yield left-invariant solutions.

Since the scalar curvature of a left-invariant metric on a Lie group is constant, it is immediate to see that left-invariant K\"ahler metrics on Lie groups are fixed points of the Calabi flow and consequently of the balanced flow.

To study the balanced flow on almost abelian Lie groups in the non-K\"ahler case, we apply the \emph{bracket flow} technique introduced by Lauret in  \cite{Lau} and already successfully applied in the study of other Hermitian flows (see for example \cite{AL} for the case of the pluriclosed flow for SKT metrics).

On the vector space $\R^{2n}$, fix the complex structure $J_0$ given by $J_0e_i=e_{2n+1-i}$, $i=1,\ldots,n$, and standard scalar product $\left< \cdot, \cdot \right>=\sum_{i=1}^n (e^i)^2$, where $\{e_1,\ldots,e_{2n}\}$ denotes the standard basis of $\R^{2n}$.

Consider a $2n$-dimensional Hermitian Lie algebra $(\mathfrak{g},J,g)$. Fixing a unitary basis $\{e_1,\ldots,e_{2n}\}$, $Je_i=e_{2n+1-i}$, $i=1,\ldots,n$, for $(\mathfrak{g},J,g)$ is equivalent to identifying $(\mathfrak{g},J,g)$ with $(\R^{2n},J_0,\left< \cdot, \cdot \right>)$. The Hermitian structure is thus fully determined by the bracket operation $\mu \in V_{2n} = \Lambda^2 (\R^{2n})^* \otimes \R^{2n}$ corresponding to the Lie bracket of $\mathfrak{g}$ under this identification. We denote the corresponding Hermitian Lie algebra by $(\mu,J_0,\left< \cdot,\cdot \right>)$ and the induced simply connected Hermitian Lie group by $(G,\mu,J_0,\left< \cdot,\cdot \right>)$. When $\mathfrak{g}$ is almost abelian, we further ask for the unitary basis to be adapted to the splitting $\mathfrak{g}=J\mathfrak{k} \oplus \mathfrak{n}_1 \oplus \mathfrak{k}$ and we denote the corresponding bracket on $\R^{2n}$ by $\mu(a,v,A)$ to keep track of the algebraic data determined by \eqref{B}. We denote the induced splitting of $\R^{2n}$ with $J\mathfrak{k} \oplus \mathfrak{n}_1 \oplus \mathfrak{k}$ as well.

The Lie group $\text{GL}(2n,J_0)$ of automorphisms of $\R^{2n}$ preserving $J_0$ acts transitively on the set of $J_0$-Hermitian metrics via pullback, so that the balanced flow starting from a Hermitian structure $(\mu_0,J_0,\left< \cdot, \cdot \right>)$ yields a family $(\mu_0,J_0,h(t)^*\left< \cdot, \cdot \right>)$, for some $h(t) \subset \text{GL}(2n,J_0)$.

One then observes that
\[
h(t) \colon (\mu_0,J_0,h(t)^*\left< \cdot, \cdot \right>) \to (h(t) \cdot \mu_0,J_0,\left< \cdot, \cdot \right>)
\]
is an isomorphism of Hermitian structures, namely $h(t)$ is a Lie algebra isomorphism which is both orthogonal and biholomorphic. Here we denoted
\[
h \cdot \mu = (h^{-1})^*\mu=h\mu(h^{-1} \cdot, h^{-1} \cdot).
\]
Let $\mu(t)=h(t) \cdot \mu_0$. Then,  up to time-dependent biholomorphisms, the balanced flow starting from  $(\mu_0,J_0,\left< \cdot, \cdot \right>)$ can be interpreted as a flow $\mu(t)$ on $V_{2n}$, such that $\mu(t) \in \text{GL}(2n,J_0) \cdot \mu_0$ for all $t$. For any Lie bracket $\mu$,
denote by $q_{\mu}$ the restriction to $\Lambda^{2}T_0^*\R^{2n} \cong \Lambda^2(\R^{2n})^*$ of $q(\omega_\mu)$, where $\omega_\mu$ is the left-invariant extension of $\Omega_0=\left<J_0 \cdot, \cdot \right>$ on $\R^{2n}$ according to the Lie group operation corresponding to the bracket $\mu$ (see \cite[Section 2]{Lau0}).

The  evolution of
$\mu(t)$ is given by the so-called bracket flow
\begin{equation} \label{brflow}
\tfrac{d}{dt}\mu= - \pi(Q_\mu)\mu,\quad \mu(0)=\mu_0,
\end{equation}
where
\begin{equation} \label{Qmu}
Q_\mu = - \tfrac{1}{2} \Omega_0^{-1} q_{\mu} \in \mathfrak{gl}_{2n}
\end{equation}
and 
\[
(\pi(A)\mu)(X,Y)=A\mu(X,Y) - \mu(AX,Y) - \mu(X,AY),
\]
for any $A \in \mathfrak{gl}_{2n}$, $\mu \in V_{2n}$, $X,Y \in \R^{2n}$.

\begin{proposition} \label{propQmu}
For a balanced almost abelian Lie algebra $(\mu(a,0,A),J_0,\left< \cdot, \cdot \right>)$, $\operatorname{tr} A=0$, the endomorphism $Q_{\mu(a,0,A)}$ is given by
\begin{equation} \label{Qmu_almostabelian}
Q_{\mu(a,0,A)}=\begin{pmatrix} p & 0 & 0 \\ 0 & P & 0 \\ 0 & 0 & p \end{pmatrix},
\end{equation}
in terms of the fixed splitting $\mathfrak{g}=J\mathfrak{k} \oplus \mathfrak{n}_1 \oplus \mathfrak{k}$, where $p=p(a,A)$ is a homogeneous fourth order polynomial in $a$ and the entries of $A$ and $P=P(a,A)$ is a symmetric endomorphism of $\mathfrak{n}_1$ commuting with $J_1=J\rvert_{\mathfrak{n}_1}$ whose entries are fourth order polynomials in $a$ and the entries of $A$.
\end{proposition}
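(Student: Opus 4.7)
The strategy is to evaluate $q_\mu$ for $\mu = \mu(a,0,A)$ directly from the right-hand side of \eqref{flowBV2}, and then read off $Q_\mu$ via \eqref{Qmu}. The critical simplification on any almost abelian Lie algebra is that every $\alpha \in \mathfrak{g}^*$ satisfies $d\alpha = (\operatorname{ad}_{e_{2n}}^t\alpha)\wedge e^{2n}$, so all differential operators ($\partial$, $\bar\partial$, their formal adjoints with respect to $\left<\cdot,\cdot\right>$, the Hodge star, and hence $\Delta_{BC}$) become algebraic operations built from the matrix $B$ in \eqref{B}, which in the balanced case is block diagonal with blocks $a$ and $A$.

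For the first summand, I would start from the balanced formula $\rho^C = -a^2\, e^1\wedge e^{2n}$ provided by Theorem \ref{Bal_avA} and write $\rho^C\wedge\omega$ as a $(2,2)$-form supported on monomials of the form $e^1\wedge e^{2n}\wedge e^i\wedge e^{2n+1-i}$, $i=2,\ldots,n-1$. Applying $*$ is algebraic, applying $i\partial\bar\partial$ introduces exactly two further factors of $\operatorname{ad}_{e_{2n}}$, and the final interior product with $\omega^{n-2}$ is again algebraic. The resulting $(1,1)$-form has polynomial coefficients of total degree four in $(a,A)$. The second summand $\tfrac{1}{n-1}\,\iota_{\omega^{n-2}}\Delta_{BC}\omega^{n-1}$ is the principal computational obstacle: the six terms of $\Delta_{BC}$ must be expanded on $\omega^{n-1}$ in the adapted unitary basis, and one must track carefully which basis $2$-forms survive after the final contraction with $\omega^{n-2}$. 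Since $\Delta_{BC}$ has total differential order four and $\omega^{n-1}$ is left-invariant, each surviving monomial is again homogeneous of degree four in $(a,A)$.

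Once $q_\mu$ is assembled, the structure of $Q_\mu$ follows by inspection. Every ingredient of \eqref{flowBV2} is $J$-natural, so $q_\mu$ is of type $(1,1)$ and hence $Q_\mu = -\tfrac{1}{2}\Omega_0^{-1}q_\mu$ commutes with $J_0$; using $q_\mu(J_0 X, Y) + q_\mu(X, J_0 Y) = 0$, which holds for any $(1,1)$-form, $Q_\mu$ is also $\left<\cdot,\cdot\right>$-symmetric. Throughout the computation, $e^1$ and $e^{2n}$ couple only among themselves (through $\rho^C$ and through the factor $e^{2n}$ in every differential), whereas the coframe $e^2,\ldots,e^{2n-1}$ of $\mathfrak{n}_1^*$ evolves among itself via $A$; consequently every basis $2$-form appearing in $q_\mu$ is either proportional to $e^1\wedge e^{2n}$ or lies in $\Lambda^2\mathfrak{n}_1^*$, yielding the asserted block decomposition of $Q_\mu$. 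The relation $J_0 e_1 = e_{2n}$ combined with $[Q_\mu, J_0]=0$ forces the two corner scalars to coincide, producing a single value $p$, while the symmetry and $J_0$-commutation of the middle block imply that $P$ is a symmetric endomorphism of $\mathfrak{n}_1$ commuting with $J_1$. The homogeneous degree-four claim for the entries of $p$ and $P$ follows from the derivative count indicated above.
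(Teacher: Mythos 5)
Your overall route is the same as the paper's: deduce the block structure of $q_\mu$ by tracking how $d$, $*$ and the contraction with $\omega^{n-2}$ interact with the splitting $\mathfrak{g}=J\mathfrak{k}\oplus\mathfrak{n}_1\oplus\mathfrak{k}$, then get symmetry and $J$-commutation of $Q_\mu$ from the fact that $q_\mu$ is a real $(1,1)$-form. However, your justification of the degree-four homogeneity has a genuine gap. You write that ``$\Delta_{BC}$ has total differential order four,'' but the operator $\Delta_{BC}=\partial\bar\partial\bar\partial^*\partial^*+\cdots+\bar\partial^*\bar\partial+\partial^*\partial$ contains the two \emph{second-order} terms $\bar\partial^*\bar\partial$ and $\partial^*\partial$; if these contributed, $q_\mu$ would acquire degree-two terms and neither the homogeneity of $p$ nor the statement of the proposition would follow from your derivative count. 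The missing observation — which is the first step of the paper's proof — is that the balanced condition $d\omega^{n-1}=0$ forces $\partial\omega^{n-1}=\bar\partial\omega^{n-1}=0$, so five of the six terms annihilate $\omega^{n-1}$ and only $\partial\bar\partial\bar\partial^*\partial^*\omega^{n-1}=-(n-1)!\,\partial\bar\partial*\partial\bar\partial\omega$ survives; after this reduction the degree count is exactly four, and it also makes the subsequent type-tracking far shorter than expanding all six terms.

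A second, smaller point: your block-structure argument is stated too loosely to be conclusive. The premise that ``$e^1$ and $e^{2n}$ couple only among themselves'' does not by itself exclude cross terms such as $e^1\wedge e^i$ with $e^i\in\mathfrak{n}_1^*$, and in fact the intermediate forms in the computation \emph{do} contain exactly one of $e^1,e^{2n}$ (e.g.\ $d\omega\in\Lambda^2\mathfrak{n}_1^*\wedge\mathfrak{k}^*$ and $Jd\omega\in\Lambda^2\mathfrak{n}_1^*\wedge J\mathfrak{k}^*$). What one actually needs, and what the paper verifies, is that the two $(n-1,n-1)$-forms being contracted both end up in $\mathfrak{k}^*\wedge J\mathfrak{k}^*\wedge\Lambda^{2n-4}\mathfrak{n}_1^*$ (using $d\mathfrak{g}^*\subset\mathfrak{k}^*\wedge J\mathfrak{k}^*\oplus\mathfrak{k}^*\wedge\mathfrak{n}_1^*$, which relies on $v=0$, and $*(\mathfrak{k}^*\wedge J\mathfrak{k}^*\wedge\Lambda^2\mathfrak{n}_1^*)\subset\Lambda^{2n-4}\mathfrak{n}_1^*$), and that contracting such a form against $\omega^{n-2}\in\mathfrak{k}^*\wedge J\mathfrak{k}^*\wedge\Lambda^{2n-6}\mathfrak{n}_1^*\oplus\Lambda^{2n-4}\mathfrak{n}_1^*$ lands in $\mathfrak{k}^*\wedge J\mathfrak{k}^*\oplus\Lambda^2\mathfrak{n}_1^*$. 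Your concluding steps (equality of the two corner scalars via $Q_\mu J_0=J_0Q_\mu$ and $J_0e_1=e_{2n}$, and the symmetry of $P$ and its commutation with $J_1$ from $q_\mu$ being an antisymmetric $(1,1)$-form) are correct.
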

\begin{proof}
We start by showing that $q_{\mu}$ lies in $\mathfrak{k}^* \wedge J\mathfrak{k}^* \oplus \Lambda^2 \mathfrak{n}_1^*$. For simplicity, let us write $\omega$ in place of $\omega_\mu$. Notice first that the balanced condition $d\omega^{n-1}=0$ implies
\[
\Delta_{BC} \omega^{n-1}=-(n-1)!\,\partial \overline{\partial} * \partial \overline{\partial} \omega = \tfrac{(n-1)!}{4} dJd * dJd\omega.
\]
Then, using that $\omega \in \mathfrak{k}^* \wedge J\mathfrak{k}^* \oplus \Lambda^2 \mathfrak{n}_1^*$, that $d\mathfrak{g}^* \subset \mathfrak{k}^* \wedge J\mathfrak{k}^* \oplus \mathfrak{k}^* \wedge \mathfrak{n}_1^*$ (which is a consequence of $v=0$) and that $*(\mathfrak{k}^* \wedge J\mathfrak{k}^* \wedge \Lambda^2 \mathfrak{n}_1^*) \subset \Lambda^{2n-4}\mathfrak{n}_1^*$, we may conclude that $\Delta_{BC} \omega^{n-1}$ lies in $\mathfrak{k}^* \wedge J\mathfrak{k}^* \wedge \Lambda^{2n-4} \mathfrak{n}_1^*$. Analogously, and using Remark \ref{Bal_avA}, we have $i\partial \overline{\partial} * \left(\rho^C \wedge \omega \right) \in \mathfrak{k}^* \wedge J\mathfrak{k}^* \wedge \Lambda^{2n-4} \mathfrak{n}_1^*$. Using the fact that $\omega^{n-2} \in \mathfrak{k}^* \wedge J\mathfrak{k}^* \wedge \Lambda^{2n-6} \mathfrak{n}_1^* \oplus  \Lambda^{2n-4} \mathfrak{n}_1^*$, our claim follows by contraction with $\omega^{n-2}$. With respect to a unitary basis $\{e_1,\ldots,e_{2n}\}$ adapted to the splitting $J\mathfrak{k} \oplus \mathfrak{n}_1 \oplus \mathfrak{k}$, we thus have
\[
q_{\mu}=-2p\,e^1 \wedge e^{2n} - 2 \omega(P \cdot, \cdot),
\]
for some $p=p(a,A) \in \R$ and $P=P(a,A) \in \mathfrak{gl}(\mathfrak{n}_1)$ which is symmetric and commutes with $J_1$, since both $\omega$ and $q_{\mu}$ are of type $(1,1)$ with respect to $J$, so that the claim follows by recalling \eqref{Qmu}.

Finally, the fact that $p$ and the entries of $P$ are homogeneous fourth order polynomials follows from the fact that the equation of the balanced flow is a fourth order \textsc{pde}.
\end{proof}

In dimension six, an explicit computation yields
\begin{equation} \label{p}
p(a,A)=\frac{1}{32}\left( \left(\operatorname{tr}(JA^2)\right)^2 - w(a,A)  \left\lVert A^+ \right\rVert^2 \right),
\end{equation}
and
\begin{equation} \label{P}
P(a,A)=\frac{w(a,A)}{32}\, [A,A^t]+\frac{a^3}{2}A^+ - \frac{a}{2} \,[A^-,A^tA],
\end{equation}
where $A^{\pm}=\frac{A \pm A^t}{2}$ and
\begin{equation} \label{w}
w(a,A)=-12a^2+ 4\left\lVert A \right\rVert^2- \left( \operatorname{tr}(JA) \right)^2.
\end{equation}
One can easily check that $A$ is traceless and commutes with $J_1=J_0\rvert_{\mathfrak{n}_1}$ if and only if its associated matrix with respect to the basis $\{e_2,\ldots,e_5\}$ is of the form
\begin{equation} \label{Amatrix}
A=\left( \begin{smallmatrix} A_{11} & A_{12} & A_{13} & A_{14} \\
A_{21} & -A_{11} & A_{23} & A_{24} \\
-A_{24} & -A_{23} & -A_{11} & A_{21} \\
-A_{14} & -A_{13} & A_{12} & A_{11} \end{smallmatrix} \right),
\end{equation}
for some $A_{ij} \in \R$.
For convenience, denote
\begin{equation} \label{b_i}
\begin{alignedat}{3}
b_1&=A_{11},\quad& b_2&=\tfrac{1}{2}(A_{12}+A_{21}),\quad& b_3&=\tfrac{1}{2}(A_{12}-A_{21}),\\
b_4&=\tfrac{1}{2}(A_{13}-A_{24}),\quad& b_5&=\tfrac{1}{2}(A_{13}+A_{24}),\quad& b_6&=\tfrac{1}{2}(A_{14}-A_{23}).
\end{alignedat}
\end{equation}
Then, one can check that the following formulas hold:
\begin{align*}
w(a,A)=&-12a^2 + 16(b_1^2+b_2^2+b_3^2+b_4^2+b_5^2+b_6^2), \\
p(a,A)=&\frac{3}{2} (b_1^2+b_2^2+b_4^2)a^2-2(b_1^2+b_2^2+b_4^2)^2-2(b_1b_5-b_2b_6)^2\\
      &-2(b_1b_3+b_4b_6)^2
-2(b_2b_3+b_4b_5)^2.
\end{align*}
As a consequence, we obtain $w(0,A) \geq 0$ and $p(0,A) \leq 0$. Moreover, $w(0,A)$ is strictly positive unless $A=0$ (corresponding to the abelian Lie bracket), while
\[
p(0,A)=0 \, \Leftrightarrow \, A_{11}=0,\,A_{12}+A_{21}=0,\,A_{13}-A_{24}=0\, \Leftrightarrow \, A \in \mathfrak{u}(\mathfrak{n}_1),
\]
which corresponds to the Hermitian almost abelian Lie algebra $(\mu(0,0,A),J,g)$ being K\"ahler.

\begin{definition}
A Lie group endowed with a left-invariant balanced structure $(G,J,g)$ is a \emph{semi-algebraic balanced soliton} if the endomorphism $Q=-\frac{1}{2}\omega^{-1}q(\omega) \in \mathfrak{gl}(\mathfrak{g})$ satisfies
\begin{equation} \label{semialg_soliton}
Q=\alpha\,\text{Id}_{\mathfrak{g}}+\tfrac{1}{2}(D+D^t),\quad \alpha \in \R,\,D \in \text{Der}(\mathfrak{g}) \cap \mathfrak{gl}(\mathfrak{g},J)
\end{equation}
and it is called \emph{algebraic balanced soliton} if $D$ satisfies \ref{semialg_soliton} and is symmetric with respect to $g$, so that one has $Q=\alpha\,\text{Id}_{\mathfrak{g}}+D$.
The soliton is called \emph{shrinking}, \emph{steady} or \emph{expanding} if the \emph{cosmological constant} $\alpha$ satisfies $\alpha > 0$, $\alpha=0$, $\alpha < 0$, respectively.
\end{definition}

The importance of the previous definition lies in the following result, whose proof is analogous to the one of \cite[Proposition 2.5]{AL}.

\begin{proposition}
Let $(G,J, g_0, \omega_0)$ be a semi-algebraic balanced soliton. Then the solution to the balanced flow on $(G,J)$ with initial data $\omega_0$ is of the form
\[
\omega(t)=c(t)\varphi_t^*\omega_0,
\]
for $c(t) \subset \R$ and $\varphi_t$ a one-parameter family of Lie automorphisms of $G$ and biholomorphisms of $(G,J)$.
\end{proposition}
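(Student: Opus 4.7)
The plan is to exhibit the solution explicitly in the form $\omega(t) = c(t)\varphi_t^*\omega_0$ by setting $\varphi_t = \Psi_{s(t)}$, where $\{\Psi_s\}_{s \in \R} \subset \text{Aut}(G)$ is the one-parameter subgroup whose differential at the identity is $e^{sD}$, and $s(t)$ is a scalar function to be determined together with $c(t)$. Since $D \in \text{Der}(\mathfrak{g}) \cap \mathfrak{gl}(\mathfrak{g}, J)$, each $\Psi_s$ is also a biholomorphism of $(G,J)$, so any ansatz of this type automatically satisfies the conclusion of the proposition provided we can find $c(t),s(t)$ realising the balanced flow.

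The next step is to rewrite the soliton identity in a form adapted to this ansatz. From $JD = DJ$ one deduces $JD^t = D^t J$, and using $\omega_0(X,Y) = g_0(JX,Y)$ a direct calculation yields
\[
\omega_0\bigl((D+D^t)X, Y\bigr) = \omega_0(DX,Y) + \omega_0(X,DY) = \left.\tfrac{d}{ds}\right|_{s=0} \Psi_s^*\omega_0\,(X,Y).
\]
Denoting this $2$-form by $L_D\omega_0$, the soliton condition \eqref{semialg_soliton} combined with the definition of $Q$ becomes $q(\omega_0) = -2\alpha\,\omega_0 - L_D\omega_0$, so that $q(\omega_0)$ lies in the span of $\omega_0$ and $L_D\omega_0$ with explicit coefficients.

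I would then invoke two structural properties of the operator $q$. Naturality gives $q(\varphi^*\omega) = \varphi^*q(\omega)$ for every biholomorphism $\varphi$, since $q$ is built from intrinsic Hermitian data. Homogeneity under rescaling gives $q(c\,\omega) = c^{\lambda} q(\omega)$ for a fixed exponent $\lambda \in \R$, which can be read from \eqref{flowBV2} by tracking the scaling of $\rho^C$ (invariant), of the Hodge star ($*_{c\omega} = c^{n-k}*_{\omega}$ on $k$-forms) and of the codifferentials entering $\Delta_{BC}$ (each contributing a factor $c^{-1}$). Plugging the ansatz into $\partial_t\omega = q(\omega)$ and applying these properties reduces the PDE to the identity
\[
c'(t)\,\varphi_t^*\omega_0 + c(t)\,s'(t)\,\varphi_t^*L_D\omega_0 \;=\; c(t)^{\lambda}\,\varphi_t^*\bigl(-2\alpha\,\omega_0 - L_D\omega_0\bigr).
\]
Matching the $\omega_0$ and $L_D\omega_0$ components yields the decoupled system $c'(t) = -2\alpha\,c(t)^{\lambda}$, $s'(t) = -c(t)^{\lambda-1}$, with $c(0)=1$, $s(0)=0$. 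Both ODEs admit a unique local solution, and the associated $\omega(t)$ is by construction a solution of the balanced flow starting at $\omega_0$. Uniqueness of solutions to \eqref{flowBV} then identifies it with the actual solution, proving the proposition.

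The main obstacle is the precise determination of the homogeneity exponent $\lambda$ from the two summands in \eqref{flowBV2}, and the handling of the degenerate case in which $L_D\omega_0$ is proportional to $\omega_0$, where the two components of the matching equation collapse to a single constraint, automatically compatible by virtue of the soliton identity. This is exactly the point at which the argument runs parallel to the pluriclosed analogue \cite[Proposition 2.5]{AL}.
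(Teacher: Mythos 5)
Your argument is exactly the one the paper has in mind: it gives no proof of its own but defers to \cite[Proposition 2.5]{AL}, whose content is precisely your ansatz $\omega(t)=c(t)\Psi_{s(t)}^*\omega_0$ combined with naturality and homogeneity of $q$ and the reduction to the ODE system for $(c,s)$; your identification $q(\omega_0)=-2\alpha\,\omega_0-L_D\omega_0$ from \eqref{Qmu} and \eqref{semialg_soliton} is correct. One point to patch: $q$ is \emph{not} homogeneous on arbitrary Hermitian metrics, because $\Delta_{BC}$ mixes fourth-order terms (two codifferentials, scaling $c^{-2}$) with the second-order terms $\bar{\partial}^*\bar{\partial}+\partial^*\partial$ (one codifferential, scaling $c^{-1}$); the single exponent $\lambda=-1$ only emerges after using $d\omega^{n-1}=0$ to kill the second-order contribution, which is legitimate here since $\omega_0$ is balanced and your ansatz consists of balanced metrics, but this should be said explicitly rather than read off by ``tracking the codifferentials'' uniformly.
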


In our case, we may easily obtain the following characterization for algebraic balanced solitons.

\begin{proposition} \label{prop_soliton}
Let $(J,\omega)$ be a left-invariant balanced structure on a non-nilpotent almost abelian Lie group $G$, with associated algebraic data $(a,A)$, $\operatorname{tr} A=0$, with respect to some fixed adapted unitary basis $\{e_1,\ldots,e_{2n}\}$. Then $(G,J,\omega)$ is an algebraic balanced soliton if and only if $A$ commutes with the symmetric endomorphism $P=Q_{\mu}\rvert_{\mathfrak{n}_1}$ in \eqref{Qmu_almostabelian}. In particular,  $Q=-\frac{1}{2}\omega^{-1}q(\omega) \in \mathfrak{gl}(\mathfrak{g})$ is given by \eqref{semialg_soliton}, with 
\begin{equation} \label{derivation}
\alpha=p,\quad D=P-p\,\text{\normalfont Id}_{\mathfrak{n}_1},
\end{equation}
where $p=\left<Q_{\mu}e_{2n},e_{2n}\right>$.
\end{proposition}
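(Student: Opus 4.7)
The plan is to leverage the block-diagonal form of $Q_\mu$ established in Proposition~\ref{propQmu}: with respect to the splitting $\mathfrak{g}=J\mathfrak{k}\oplus\mathfrak{n}_1\oplus\mathfrak{k}$, the endomorphism $Q_\mu$ has scalar blocks equal to $p$ on $J\mathfrak{k}$ and $\mathfrak{k}$ and a symmetric, $J_1$-commuting block $P$ on $\mathfrak{n}_1$. Setting $D:=Q_\mu-p\,\text{Id}_\mathfrak{g}$, which vanishes on $J\mathfrak{k}\oplus\mathfrak{k}$ and equals $P-p\,\text{Id}_{\mathfrak{n}_1}$ on $\mathfrak{n}_1$, the symmetry of $D$ with respect to $g$ and the commutation $DJ=JD$ follow immediately from the corresponding properties of $P$. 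Consequently, the only genuine content of the algebraic-soliton condition is whether $D$, or more generally some translate $Q_\mu-\alpha\,\text{Id}_\mathfrak{g}$, lies in $\text{Der}(\mathfrak{g})$.

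For the sufficiency of $[A,P]=0$, I would verify the Leibniz rule for $D$ directly. Because $\mathfrak{n}$ is abelian and $D(\mathfrak{n})\subseteq\mathfrak{n}$, the derivation identity holds automatically on brackets of two elements of $\mathfrak{n}$, so only the brackets $[e_{2n},e_1]=a\,e_1$ and $[e_{2n},Y]=AY$, for $Y\in\mathfrak{n}_1$, remain to be checked. The first is trivial since $De_{2n}=De_1=0$, while the second unfolds to the identity $(P-p\,\text{Id}_{\mathfrak{n}_1})(AY)=A\,(P-p\,\text{Id}_{\mathfrak{n}_1})Y$ on $\mathfrak{n}_1$, which is precisely $[P,A]=0$.

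For necessity, suppose $(G,J,\omega)$ is an algebraic balanced soliton, so $Q_\mu=\alpha\,\text{Id}_\mathfrak{g}+D'$ for some $\alpha\in\R$ and some symmetric $D'\in\text{Der}(\mathfrak{g})\cap\mathfrak{gl}(\mathfrak{g},J)$. Since $Q_\mu$ is block-diagonal, so is $D'=Q_\mu-\alpha\,\text{Id}_\mathfrak{g}$, and imposing the Leibniz rule on the same two brackets as above gives
\[
a(p-\alpha)=0,\qquad [P,A]=(p-\alpha)\,A.
\]

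The main obstacle is to upgrade these two relations to $\alpha=p$ (and hence $[A,P]=0$) when $a=0$. In this situation, the non-nilpotence of $\mathfrak{g}$ is equivalent to $A$ being non-nilpotent, and the idea is to exploit the symmetry of $P$: diagonalizing $P$ in a $\left<\cdot,\cdot\right>$-orthonormal basis of $\mathfrak{n}_1$ with eigenvalues ordered monotonically, each non-zero eigenspace of $\text{ad}_P$ acting on $\mathfrak{gl}(\mathfrak{n}_1)$ is spanned by matrix units which are strictly triangular, hence nilpotent. Since the relation $[P,A]=(p-\alpha)A$ places $A$ in the $(p-\alpha)$-eigenspace of $\text{ad}_P$, the non-nilpotence of $A$ forces $p-\alpha=0$, yielding $\alpha=p$ and $[A,P]=0$ as required.
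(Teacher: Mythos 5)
Your proof is correct, and the necessity direction takes a genuinely different route from the paper's. Both arguments agree on the easy parts: the block-diagonal form of $Q_\mu$ from Proposition \ref{propQmu} handles symmetry and $J$-invariance of $D$, and the Leibniz rule on $[e_{2n},Y]=AY$ reduces the derivation condition to commutation with $A$. The difference is in how $\alpha=p$ is forced. The paper invokes the structural fact that every derivation of a non-nilpotent almost abelian Lie algebra has image contained in the nilradical $\mathfrak{n}$; since $De_{2n}=(p-\alpha)e_{2n}$ and $e_{2n}\notin\mathfrak{n}$, this pins down $\alpha=p$ in one line, after which the Leibniz rule on $\mathfrak{n}_1$ gives $[P,A]=0$ directly. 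You instead extract the two constraints $a(p-\alpha)=0$ and $[P,A]=(p-\alpha)A$ from the Leibniz rule and then rule out $p\neq\alpha$ when $a=0$ by observing that $[P,A]=cA$ with $P$ symmetric and $c\neq 0$ forces $A$ to be nilpotent (your strictly-triangular argument is fine; equivalently, $A$ shifts $P$-eigenspaces $E_\lambda\to E_{\lambda+c}$, or one can note $\operatorname{tr}(A^k)=0$ for all $k$), contradicting non-nilpotence of $\mathfrak{g}$. Your approach is more self-contained, trading the external fact about derivations of solvable Lie algebras for an elementary linear-algebra lemma, at the cost of a slightly longer case analysis; the paper's approach is shorter but relies on that structural input. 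Both correctly use the non-nilpotence hypothesis, which is genuinely needed (as the paper's final example on the nilpotent algebra $(0,0,0,0,f^{12},f^{13})$ shows, where $[A,P]\neq 0$ yet an algebraic soliton exists).
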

\begin{proof}
First, recall that a derivation of a non-nilpotent almost abelian Lie algebra is contained in the nilradical $\mathfrak{n}$. Thus, if we wish to decompose \eqref{Qmu_almostabelian} as \eqref{semialg_soliton}, assuming $D$ to be symmetric, one has to set the cosmological constant $\alpha$ and the derivation $D$ to be given by \eqref{derivation}.

Now, an endomorphism $D$ of $\mathfrak{n}_1$ is a derivation of $\mathfrak{g}$ if and only if it commutes with the $\operatorname{ad}_{e_{2n}}$-action on $\mathfrak{n}_1$: to see this, note that
\[
D[e_{2n},X]=[De_{2n},X]+[e_{2n},DX],\quad X \in \mathfrak{g},
\]
holds for $X=e_1,e_{2n}$ since both sides vanish (we are using that the balanced condition forces $v=0$), while, for $X \in \mathfrak{n}_1$, it reads $DA(X)=AD(X)$. The claim follows by substituting $D$ with $P-p\,\text{\normalfont Id}_{\mathfrak{n}_1}$, and using that $\text{Id}_{\mathfrak{n}_1}$ lies in the center of $\mathfrak{gl}_{2n}$.
\end{proof}

An important consequence of Proposition \ref{propQmu} is that, for any almost abelian Lie bracket $\mu(a,0,A)$, $\operatorname{tr} A=0$, the endomorphism $Q_{\mu(a,0,A)}$ preserves the decomposition $J\mathfrak{k} \oplus \mathfrak{n}_1 \oplus \mathfrak{k}$. In turn, this implies that the bracket flow \eqref{brflow} associated with the balanced flow preserves the set of brackets of the form $\mu(a,0,A)$ with respect to this decomposition, without the need to resort to a change of gauge. It is then natural to study the bracket flow in terms of \textsc{ode}s in the variables $a$ and $A$.

\begin{proposition}
Let $(\mu(a_0,0,A_0),J_0,\left< \cdot, \cdot \right>)$, $\operatorname{tr}A_0=0$, be a balanced almost abelian Lie algebra. Then the bracket flow \eqref{brflow} starting from the bracket $\mu(a_0,0,A_0)$ is equivalent to the {\normalfont \textsc{ode}} system
\begin{equation} \label{bracketODE}
\begin{cases}
\tfrac{d}{dt}a=p\,a, & a(0)=a_0, \\
\tfrac{d}{dt}A=[A,P]+p\,A, & A(0)=A_0,
\end{cases}
\end{equation}
where $p=p(a,A)$ and $P=P(a,A)$ are as in Proposition \ref{propQmu}.
\end{proposition}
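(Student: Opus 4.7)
The plan is to unpack the bracket-flow equation $\frac{d}{dt}\mu = -\pi(Q_\mu)\mu$ directly using the block-diagonal structure of $Q_\mu$ provided by Proposition \ref{propQmu}, and read off the evolution of the parameters $a$ and $A$. Because $\mu(a,0,A)$ has $\mathfrak{n}$ as an abelian ideal, the only nontrivial brackets are $\mu(e_{2n},e_1)=ae_1$ and $\mu(e_{2n},Y)=AY$ for $Y\in\mathfrak{n}_1$, so everything reduces to evaluating $(\pi(Q_\mu)\mu)(e_{2n},\cdot)$.

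The first step is to verify that the set of brackets of the form $\mu(a,0,A)$ (with respect to the fixed splitting $J\mathfrak{k}\oplus\mathfrak{n}_1\oplus\mathfrak{k}$) is invariant under the flow. Since $Q_{\mu(a,0,A)}=\operatorname{diag}(p,P,p)$ preserves each summand of this splitting, a short check using $(\pi(Q)\mu)(X,Y)=Q\mu(X,Y)-\mu(QX,Y)-\mu(X,QY)$ shows that $\pi(Q_\mu)\mu$ again vanishes on $\mathfrak{n}\times\mathfrak{n}$ (both terms $\mu(Q\cdot,\cdot)$ and $\mu(\cdot,Q\cdot)$ land in $\mathfrak{n}$, where $\mu$ is zero) and sends $(e_{2n},e_1)$ into $J\mathfrak{k}$ and $(e_{2n},Y)$ into $\mathfrak{n}_1$ for $Y\in\mathfrak{n}_1$. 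In particular the component $v$ of the bracket remains $0$, so the almost abelian balanced form is preserved and the flow stays within the stratum parametrized by $(a,A)$.

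Next I would compute the two relevant components explicitly. For the $e_{2n}\wedge e_1$ component,
\begin{align*}
(\pi(Q_\mu)\mu)(e_{2n},e_1)
&= Q_\mu(ae_1)-\mu(pe_{2n},e_1)-\mu(e_{2n},pe_1) \\
&= ape_1 - p\,ae_1 - p\,ae_1 = -pa\,e_1,
\end{align*}
so $-\pi(Q_\mu)\mu$ contributes $pa\,e_1$, yielding $\dot a=pa$. For $Y\in\mathfrak{n}_1$,
\begin{align*}
(\pi(Q_\mu)\mu)(e_{2n},Y)
&= Q_\mu(AY)-\mu(pe_{2n},Y)-\mu(e_{2n},PY) \\
&= P(AY) - pAY - A(PY) = [P,A]Y - pAY,
\end{align*}
so $-\pi(Q_\mu)\mu$ contributes $([A,P]+pA)Y$, yielding $\dot A=[A,P]+pA$. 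These are exactly the two equations of \eqref{bracketODE}, with the initial condition inherited from $\mu_0=\mu(a_0,0,A_0)$.

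The main technical point, and the only potential obstacle, is the invariance of the $\mu(a,0,A)$ stratum under the flow: without that, the equations for $a$ and $A$ alone do not describe the bracket flow. This is however immediate from the block form of $Q_\mu$ in Proposition \ref{propQmu}, so the whole argument is essentially bookkeeping once $Q_\mu$ is known. Everything else is a direct substitution in the definition of $\pi$.
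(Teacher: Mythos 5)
Your proposal is correct and follows essentially the same route as the paper: the paper packages the computation as $\tfrac{d}{dt}(\mathrm{ad}_{\mu}e_{2n})=-[Q_\mu,\mathrm{ad}_{\mu}e_{2n}]+\mathrm{ad}_{\mu}(Q_\mu e_{2n})$ (citing the analogous identity from the pluriclosed-flow setting) and then reads off the block form, while you expand $\pi(Q_\mu)\mu$ directly on the basis vectors, which is the same calculation. Your explicit check that the stratum $\mu(a,0,A)$ is preserved is a point the paper records just before the proposition (as a consequence of the block-diagonal form of $Q_\mu$) rather than inside the proof, but the content is identical.
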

\begin{proof}
Following \cite[Proposition 4.11]{AL} and looking at \eqref{brflow}, one computes
\[
\tfrac{d}{dt}\left(\text{ad}_{\mu}e_{2n}\right)=\text{ad}_{\dot{\mu}}e_{2n}=-\text{ad}_{\pi(Q_\mu)\mu}e_{2n}=-[Q_\mu,\text{ad}_{\mu}e_{2n}]+\text{ad}_{\mu}(Q_\mu e_{2n}),
\]
which yields
\[
\frac{d}{dt} \begin{pmatrix} a & 0 \\ 0 & A \end{pmatrix} = -\left[ \begin{pmatrix} p & 0 \\ 0 & P \end{pmatrix}, \begin{pmatrix} a & 0 \\ 0 & A \end{pmatrix} \right] + p\begin{pmatrix} a & 0 \\ 0 & A \end{pmatrix}=\begin{pmatrix} p\,a & 0 \\ 0 & [A,P]+p\,A \end{pmatrix}. \qedhere
\]
\end{proof}

\begin{theorem}
The maximal definition interval of the solution of the balanced flow  starting from a left-invariant balanced structure $(J,\omega_0)$ on a six-dimensional unimodular almost abelian Lie group $G$ with associated bracket $\mu(0,0,A_0)$, $\operatorname{tr} A_0=0$, is of the form $(T_{\text{\normalfont min}},\infty)$, for some $T_{\text{\normalfont min}} \in \R_{<0} \cup \{ -\infty \}$. Moreover, $\omega_0$ is a fixed solution of the balanced flow if and only if it is K\"ahler.
\end{theorem}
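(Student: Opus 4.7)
The plan is to work entirely at the level of the bracket flow \eqref{bracketODE}, which is an autonomous polynomial system in the variables $(a,A)$. Local existence and uniqueness on some maximal open interval are automatic, so the two assertions of the theorem amount to (i) ruling out finite-time blow-up in forward time and (ii) determining when the right-hand side of the vector field vanishes identically.

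I would dispose of the fixed-point characterization first, since it is purely algebraic. By definition $\omega_0$ is a fixed solution of the balanced flow iff $q(\omega_0)=0$, equivalently $Q_{\mu(a_0,0,A_0)}=0$; by Proposition \ref{propQmu} this amounts to $p(a_0,A_0)=0$ and $P(a_0,A_0)=0$. Reading the explicit expression \eqref{p} in the coordinates \eqref{Amatrix}--\eqref{b_i}, one sees that $-p$ is a manifestly non-negative sum of squares whose vanishing forces $b_1=b_2=b_4=0$, which is exactly the condition $A_0\in\mathfrak{u}(\mathfrak{n}_1,J_1,g)$, i.e.\ $A_0^t=-A_0$. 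Substituting $A_0^+=0$ and $A_0^t=-A_0$ into \eqref{P} makes every summand vanish ($[A_0,A_0^t]=0$, and $[A_0^-,A_0^tA_0]=[A_0,-A_0^2]=0$), so $p=0$ automatically implies $P=0$. Finally, by the characterization of K\"ahler almost abelian Lie algebras in \cite{LV}, the conditions $v=0$ and $A_0\in\mathfrak{u}(\mathfrak{n}_1,J_1,g)$ are precisely the K\"ahler condition, closing the equivalence.

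For the long-time existence I would argue by a Gronwall-type estimate on the explicit ODE. The bound $|a(t)|\leq|a_0|$ is immediate from $\dot a = p a$ and $p\leq 0$. The serious step is to control $\|A(t)\|$. A direct computation from $\dot A=[A,P]+pA$ (using $P^t=P$) gives
\[
\tfrac{d}{dt}\|A\|^2 \;=\; -\tfrac{w}{16}\|[A,A^t]\|^2 \;+\; a\operatorname{tr}\!\bigl([A^-,A^tA][A,A^t]\bigr) \;+\; 2p\|A\|^2,
\]
in which the first and third terms are non-positive. The key algebraic observation is that $[A^-,A^tA]$ factors through $A^+$ --- it vanishes identically when $A^+=0$ --- so $\|[A^-,A^tA]\|\leq C\|A^+\|\,\|A\|^2$; similarly $[A,A^t]=-2[A^+,A^-]$ yields $\|[A,A^t]\|\leq C\|A^+\|\,\|A^-\|$. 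Combining these with the quartic damping $2p\|A\|^2\leq -\tfrac{1}{4}\|A^+\|^4\|A\|^2$ read off from \eqref{p}, and absorbing the indefinite cross term into the two negative contributions via AM-GM calibrated against the lower bound $w\geq 4a^2$ from \eqref{w}, one obtains a differential inequality of the form $\tfrac{d}{dt}\|A\|^2 \leq C(a_0)\,\|A\|^2$, which by Gronwall excludes finite-time blow-up for $t\geq 0$.

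The main obstacle is exactly this last estimate: a naive bound from \eqref{bracketODE} yields $\|\dot A\|\lesssim \|A\|^5$, which would allow finite-time positive blow-up, and the rescue depends on extracting the $A^+$-factor from $[A^-,A^tA]$ and pairing it with the sharp lower bound on $-p$ coming from \eqref{p}. Once $\|A(t)\|$ is controlled on bounded forward time intervals, the right-hand side of \eqref{bracketODE} is bounded along the solution and standard ODE theory gives $T_{\max}=+\infty$, so that the solution exists on $(T_{\min},\infty)$ for some $T_{\min}\in\mathbb{R}_{<0}\cup\{-\infty\}$.
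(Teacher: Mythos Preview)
Your treatment of the fixed-point characterisation is correct and in fact cleaner than the paper's: you argue directly that $Q_{\mu}=0$ forces $p=0$, hence $A^+=0$, hence $P=0$; the paper instead infers non-fixedness from strict monotonicity of $a^2+\lVert A\rVert^2$.

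The long-time existence argument, however, has a genuine gap. Your differential identity
\[
\tfrac{d}{dt}\lVert A\rVert^2 \;=\; -\tfrac{w}{16}\lVert [A,A^t]\rVert^2 \;+\; a\operatorname{tr}\!\bigl([A^-,A^tA][A,A^t]\bigr) \;+\; 2p\lVert A\rVert^2
\]
is correct, but the proposed absorption of the middle term by AM-GM does not close. With your bounds $\lVert[A^-,A^tA]\rVert\lesssim\lVert A^+\rVert\,\lVert A\rVert^2$ and $\lVert[A,A^t]\rVert\lesssim\lVert A^+\rVert\,\lVert A^-\rVert$ one gets a cross term of size $\lesssim |a_0|\,\lVert A^+\rVert^2\lVert A\rVert^3$. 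Splitting this against the damping $-\tfrac14\lVert A^+\rVert^4\lVert A\rVert^2$ via Young's inequality necessarily leaves a residual of order $a_0^2\lVert A\rVert^4$, and the term $-\tfrac{w}{16}\lVert[A,A^t]\rVert^2$ with $w\geq 4a^2$ cannot absorb it because $\lVert[A,A^t]\rVert$ admits no useful lower bound (it can vanish while $\lVert A\rVert$ is large, e.g.\ when $A^+$ and $A^-$ commute). So the stated inequality $\tfrac{d}{dt}\lVert A\rVert^2\leq C(a_0)\lVert A\rVert^2$ is not established, and the naive residual $\lesssim\lVert A\rVert^4$ is compatible with finite-time blow-up.

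The paper sidesteps this entirely by observing, via a direct computation specific to $4\times 4$ matrices of the form \eqref{Amatrix}, that the cross term vanishes \emph{identically}: $\langle A,[A,[A^-,A^tA]]\rangle=0$, equivalently $\operatorname{tr}([A^-,A^tA][A,A^t])=0$. This gives the clean monotonicity $\tfrac{d}{dt}(a^2+\lVert A\rVert^2)\leq 0$, so the solution stays in a compact set and exists for all forward time. The missing ingredient in your sketch is precisely this algebraic identity; once you have it, no Gronwall estimate is needed.
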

\begin{proof}
By the previous proposition, the balanced flow for left-invariant balanced metrics on unimodular almost abelian Lie groups is equivalent to the second equation in the \textsc{ode} system \eqref{bracketODE} for the associated algebraic datum $A$, since the other datum $a$ vanishes identically. We remark that, in a suitable basis for $\R^4$, the solution $A(t)$ of the bracket flow is of the form \eqref{Amatrix} for all $t$, with time-dependent entries. With respect to the scalar product 
\begin{equation} \label{scalmatrices}
\left<M,N\right>=\operatorname{tr}(M^t N)
\end{equation}
on matrices, we can compute
\begin{align*}
\tfrac{d}{dt} \lVert A \rVert^2&= 2 \left< A, \tfrac{d}{dt}A \right>=2p \lVert A \rVert^2 + 2 \left< A, [A,P] \right>.
\end{align*}
Now, using the explicit expression for $P$ in \eqref{P}, we obtain
\begin{equation} \label{<A,[A,P]>}
\left< A, [A,P] \right>=\frac{w(0, A)}{32} \left< A,[A, [A,A^t]] \right>
\end{equation}
Now, using \eqref{scalmatrices} and the fact that $\operatorname{tr}(MN)=\operatorname{tr}(NM)$ for every pair of matrices $M,N$, it is easy to see that $\left< A, [A,A^t] \right>=0$ and $\left< A,[A, [A,A^t]] \right>=-\lVert [A,A^t] \rVert^2$, while an explicit computation using a matrix $A$ of the form \eqref{Amatrix} shows that $\left< A, [A, [A^-,A^tA]] \right>=0$. Thus, \eqref{<A,[A,P]>} becomes
\[
\left< A, [A,P] \right> = -\frac{w(0, A)}{32}\lVert [A,A^t] \rVert^2,
\]
so that
\[
\tfrac{d}{dt} \lVert A \rVert^2= 2 p(0,A)\lVert A \rVert^2 - \frac{w(0, A)}{16} \lVert [A,A^t] \rVert^2,
\]
which is non-positive, using that $p(0,A) \leq 0$ and $w(0,A) \geq 0$. Thus, we can conclude that the solution to \eqref{bracketODE} remains inside a compact subset and is defined for all positive times. To prove the second part of the claim, it suffices to note that the previous expression is strictly negative unless $A \in \mathfrak{u}(\mathfrak{n}_1)$, which corresponds to a K\"ahler metric.
\end{proof}

In the following proposition, we say that a one-parameter family of Hermitian Lie groups $(G_t,J_t,g_t)$ (with identity element denoted by $e_t$) converges in the Cheeger-Gromov sense to a Hermitian Lie group $(G_{\infty},J_{\infty},g_{\infty})$ if there exist a subsequence $\{t_k\}_{k \in \mathbb{N}}$, a sequence of open subsets $U_k$ of $G_\infty$, which contain the identity element $e_{\infty}$ of $G_{\infty}$ and exhaust $G_\infty$, and a sequence of smooth embeddings $\varphi_k \colon U_k \to G_{t_k}$ such that $\varphi_k(e_\infty)=e_{t_k}$, $\varphi_k^*J_{t_k}=J_{\infty} \rvert_{U_k}$ and $\varphi_k^* g_{t_k}$ converges to $g_\infty$ in $C^{\infty}$ topology uniformly over compact subsets.

\begin{proposition}
Let $(J,\omega_0)$ be a left-invariant balanced structure on a six-dimensional unimodular almost abelian Lie group $G$ with associated bracket $\mu(0,0,A_0)$, $\operatorname{tr} A_0=0$. If the bracket flow \eqref{bracketODE} converges to a bracket $\mu(0,0,A_{\infty})$, then $A_{\infty} \in \mathfrak{u}(\mathfrak{n}_1)$. Moreover, the solution to the balanced flow \eqref{flowBV} converges in the Cheeger-Gromov sense to the flat K\"ahler unimodular almost abelian Lie group $(G,\mu(0,0,A_{\infty}),J_0,\left< \cdot, \cdot \right>)$. 
\end{proposition}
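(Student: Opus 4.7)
The plan is to exploit the monotonicity identity
\[
\tfrac{d}{dt}\bigl(a^2+\lVert A\rVert^2\bigr)=2p(a,A)\bigl(a^2+\lVert A\rVert^2\bigr)-\tfrac{1}{16}w(a,A)\,\lVert[A,A^t]\rVert^2
\]
already derived in the proof of the previous theorem, both summands on the right being non-positive in view of $p(a,A)\leq 0$ and $w(a,A)\geq 0$ in the $6$-dimensional setting.

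First I would observe that, under the convergence hypothesis $(a(t),A(t))\to(a_\infty,A_\infty)$, the right-hand side above must vanish at the limit. Setting $F(t):=a(t)^2+\lVert A(t)\rVert^2$, the function $F$ is monotonically non-increasing, bounded below, and therefore convergent; since $F'(t)$ is a continuous function of $(a(t),A(t))$, it tends to its value at $(a_\infty,A_\infty)$, and this limit cannot be strictly negative or else $F$ would tend to $-\infty$. Hence both non-positive summands must vanish at $(a_\infty,A_\infty)$, giving in particular $p(a_\infty,A_\infty)\bigl(a_\infty^2+\lVert A_\infty\rVert^2\bigr)=0$.

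From this equality I would conclude that either $(a_\infty,A_\infty)=(0,0)$, in which case trivially $A_\infty\in\mathfrak{u}(\mathfrak{n}_1)$, or $p(a_\infty,A_\infty)=0$. In the latter case, the explicit sum-of-squares expression for $p(a,A)$ in terms of the parameters $b_i$ forces (already from its first two summands) $b_1=b_2=b_4=0$, i.e.\ $A_{11}=0$, $A_{12}+A_{21}=0$ and $A_{13}-A_{24}=0$ for $A=A_\infty$. By the discussion following the explicit formula for $p(a,A)$, this is precisely the condition $A_\infty\in\mathfrak{u}(\mathfrak{n}_1)$ and, combined with $v_\infty=0$, it identifies $\mu(a_\infty,0,A_\infty)$ with a K\"ahler almost abelian Lie algebra via the characterization of \cite{LV}.

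For the Cheeger--Gromov statement, I would appeal to the standard correspondence between a geometric flow and its associated bracket flow due to Lauret \cite{Lau}: the balanced-flow solution on $(G,J_0,\omega_0)$ coincides, modulo a one-parameter family of Lie automorphisms in $\mathrm{GL}(2n,J_0)$, with the family $(G,\mu(t),J_0,\langle\cdot,\cdot\rangle)$, so continuous convergence $\mu(t)\to\mu_\infty$ in $V_{2n}$ translates into Cheeger--Gromov convergence to the simply connected Hermitian Lie group associated with $\mu_\infty=\mu(a_\infty,0,A_\infty)$. The main obstacle I foresee is rigorously ensuring $F'(t)\to 0$ from mere convergence of $F$; however the continuity of $F'$ as a function of $(a,A)$ together with the hypothesized convergence of the bracket makes this essentially a compactness argument along the orbit closure, so the derivation should go through without substantive difficulty.
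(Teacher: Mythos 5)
Your argument is correct, but it takes a genuinely different route from the paper's. The paper does not pass through the monotone quantity $a^2+\lVert A\rVert^2$ at this point; instead it computes $\tfrac{d}{dt}\lVert A^+\rVert^2$ directly along \eqref{bracketODE}, finds that it equals $-\tfrac14\lVert A^+\rVert^6$ minus further manifestly non-negative terms, and then compares with the scalar \textsc{ode} $\tfrac{d}{dt}y=-\tfrac14 y^3$ to obtain the a priori estimate $\lVert A^+(t)\rVert^2\leq\bigl(\lVert A^+_0\rVert^{-4}+\tfrac{t}{2}\bigr)^{-1/2}$. This yields $A^+(t)\to 0$ with an explicit polynomial rate and \emph{without} invoking the convergence hypothesis, which is then used only to identify the limit bracket. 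Your argument is softer: from convergence of $(a(t),A(t))$ and non-negativity of $F=a^2+\lVert A\rVert^2$ you deduce that the (continuous, non-positive) right-hand side of $\tfrac{d}{dt}F$ must vanish at the limit, so $p(a_\infty,A_\infty)\bigl(a_\infty^2+\lVert A_\infty\rVert^2\bigr)=0$, and the sum-of-squares formula for $p$ then forces $b_1=b_2=b_4=0$, i.e.\ $A_\infty\in\mathfrak{u}(\mathfrak{n}_1)$ (the degenerate case $a_\infty=0$, $A_\infty=0$ being trivial). This is perfectly valid --- the vanishing of each non-positive summand in the limit, the identification $p=0\Leftrightarrow A\in\mathfrak{u}(\mathfrak{n}_1)$ for matrices of the form \eqref{Amatrix}, and the preservation of that form along the flow are all justified by material already established in the paper --- and it requires no new computation. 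What it buys is economy; what it loses is the quantitative, unconditional decay of $A^+(t)$ that the paper's comparison argument provides even when convergence of the bracket flow is not assumed. The final appeal to Lauret's correspondence between bracket-flow convergence and Cheeger--Gromov convergence is the same in both treatments.
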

\begin{proof}
Let $(0,A(t))$ be the solution to \eqref{bracketODE} starting from $(0,A_0)$: one can compute
\begin{align*}
\tfrac{d}{dt} \lVert A^+ \rVert^2 = \frac{1}{2} \left< \tfrac{d}{dt}(A+A^t), A+A^t \right>=p\lVert A^+ \rVert^2 + \left< [A^-,P],A^+\right>,
\end{align*}
which an explicit computation using \eqref{p}--\eqref{b_i} shows to yield
\begin{align*}
\tfrac{d}{dt} \lVert A^+ \rVert^2=&-\frac{1}{4} \lVert A^+ \rVert^6-16\left((b_1b_3+b_4b_6)^2+(b_1b_5-b_2b_6)^2+(b_2b_3+b_4b_5)^2\right) \\&\cdot\left(3(b_1^2+b_2^2+b_4^2)+2(b_3^2+b_5^2+b_6^2)\right)-4a^2(b_1^2+b_2^2+b_4^2)^2.
\end{align*}
Thus, we have
\[
\tfrac{d}{dt} \lVert A^+ \rVert^2\leq - \frac{1}{4} \lVert A^+ \rVert^6.
\]
A comparison with the \textsc{ode} $\frac{d}{dt} y = -\frac{1}{4} y^3$, $y(0)=\lVert A^+_0 \rVert^2$, yields the a priori estimate
\[
\lVert A^+(t) \rVert^2 \leq \left( \lVert A^+_0 \rVert^{-4} + \frac{t}{2} \right)^{-\frac{1}{2}},
\]
so that $\lim_{t \to \infty} A^+(t) =0$. It follows that $A_\infty \in \mathfrak{u}(\mathfrak{n}_1)$, as claimed.

The second part of the claim follows by well-known results (see \cite[Corollary 6.20]{Lau1} and \cite[Section 5.1]{Lau}). The limit K\"ahler Lie group is Ricci-flat (hence flat, by \cite{AK}), as one can check using formula \eqref{rhoC}, for example.
\end{proof}

\begin{example}
We now  study   the balanced flow  on the  six-dimensional nilpotent almost abelian Lie algebra $(0,0,0,0,f^{12},f^{13})$
endowed  with the complex structure determined by a basis of $(1,0)$-forms $\{\alpha^1,\alpha^2,\alpha^3\}$ satisfying
\[
d\alpha^1=d\alpha^2=0,\quad d\alpha^3=i\alpha^{12}+i\alpha^{2\overline{1}}
\]
and  the balanced Hermitian metric with fundamental form
\[
\omega_0=i\alpha^{1\overline{1}}+i\alpha^{2\overline{2}}+i\alpha^{3\overline{3}}.
\]
Let us assume the solution to the balanced flow \eqref{flowBV2} is of the form
\[
\omega(t)=ig_1(t)\alpha^{1\overline{1}}+ig_2(t)\alpha^{2\overline{2}}+ig_3(t)\alpha^{3\overline{3}},
\]
for some smooth functions $g_i(t)$, $g_i(0)=1$, $i=1,2,3$. Then one can compute
\[
\Delta_{BC} \omega^2 = -2 \partial \overline\partial * \partial \overline\partial \omega = -8 \frac{g_3^2}{g_1g_2}\alpha^{1\overline{1}2\overline{2}}
\]
and $\iota_{\omega}\left( \tfrac{1}{2} \Delta_{BC} \omega^2 \right)$, the unique $2$-form whose wedge product with $\omega$ gives $\tfrac{1}{2}\Delta_{BC} \omega^2 $, is given by
\[
\iota_{\omega}\left( \tfrac{1}{2} \Delta_{BC} \omega^2 \right)=2i \frac{g_3^2}{g_1g_2^2} \alpha^{1\overline{1}} +2i \frac{g_3^2}{g_1^2g_2} \alpha^{2\overline{2}}-2i \frac{g_3^3}{g_1^2g_2^2} \alpha^{3\overline{3}},
\]
so that the balanced flow is equivalent to
\[
\begin{dcases}
	\tfrac{d}{dt} g_1= 2\frac{g_3^2}{g_1g_2^2},\\
	\tfrac{d}{dt}g_2= 2\frac{g_3^2}{g_1^2g_2},\\
	\tfrac{d}{dt}g_3=-2\frac{g_3^3}{g_1^2g_2^2},
\end{dcases}
\]
with initial conditions $g_1(0)=g_2(0)=g_3(0)=1$, whose solution is
\[
g_1(t)=g_2(t)=(1+12t)^{\frac{1}{6}},\quad g_3(t)=(1+12t)^{-\frac{1}{6}},
\]
defined on $I=(-\frac{1}{12},\infty)$.

In terms of bracket flow, we can see that this is in fact an expanding algebraic soliton. Let $(e_1,\ldots,e_6)$ be the unitary basis of $\mathfrak{g}$ for $(J,\omega_0)$, such that $\alpha^1=\frac{\sqrt{2}}{2}(e^1+ie^6)$, $\alpha^2=\frac{\sqrt{2}}{2}(e^2+ie^5)$, $\alpha^3=\frac{\sqrt{2}}{2}(e^3+ie^4)$, $Je_i=e_{7-i}$, $i=1,2,3$. The corresponding data $(a,0,A)$ and endomorphism $Q_{\mu(a,0,A)}$ are
\[
a=0,\quad A=\left( \begin{smallmatrix} 0&0&0&0 \\ \sqrt{2}&0&0&0 \\ 0&0&0&\sqrt{2} \\ 0&0&0&0 \end{smallmatrix} \right), \quad Q_{\mu(a,0,A)}=\left( \begin{smallmatrix}
-1&0&0&0&0&0 \\
 0&-1&0&0&0&0 \\
 0&0&1&0&0&0 \\
 0&0&0&1&0&0 \\
 0&0&0&0&-1&0 \\
 0&0&0&0&0&-1 
\end{smallmatrix} \right).
\]
We have $Q_{\mu(a,0,A)}=\alpha\,\text{Id}_{6}+D$, with
\[
\alpha=-3,\quad D=\left( \begin{smallmatrix} 
	2&0&0&0&0&0 \\
	0&2&0&0&0&0 \\
	0&0&4&0&0&0 \\
	0&0&0&4&0&0 \\
	0&0&0&0&2&0 \\
	0&0&0&0&0&2   \end{smallmatrix} \right) \in \text{Der}(\mu(a,0,A)) \cap \text{Sym}_6.
\]
In contrast with Proposition \ref{prop_soliton}, which holds for non-nilpotent almost abelian Lie algebras, we have $[A,P] \neq 0$, with $P=Q_{\mu(a,0,A)}\rvert_{\mathfrak{n}_1}$, $\mathfrak{n}_1=\text{span}\left<e_2,e_3,e_4,e_5\right>$. \end{example}

\section{Anomaly flow} \label{sec_anom}

Another important geometric flow in (conformally) balanced geometry is the so-called \emph{anomaly flow}, introduced in \cite{PPZ, PPZ2}. 

Consider a complex manifold $X$ of complex dimension $3$, endowed with a nowhere vanishing closed $(3,0)$-form $\Psi$, and a holomorphic vector bundle $E$ over $X$. The anomaly flow can be written as the coupled flow for $(\omega(t),H(t))$, Hermitian metrics respectively on $X$ and on the fibers of $E$,
\[
\begin{dcases}
\tfrac{\partial}{\partial t} \left(\lVert \Psi \rVert_{\omega(t)} \omega(t)^2	\right) = i \partial \overline \partial \omega(t) - \frac{\alpha^\prime}{4} \left( \operatorname{tr}( R^{\tau}_t \wedge R^{\tau}_t ) - \operatorname{tr}(F_t^{\kappa} \wedge F_t^{\kappa})\right), & \omega(0)=\omega_0,\\
H(t)^{-1} \tfrac{\partial}{\partial t} H(t) = \frac{\omega(t)^2 \wedge F_t^{\kappa}}{\omega(t)^3}, & H(0)=H_0,
\end{dcases}
\]
where $R^\tau_t$ and $F^\kappa_t$ are the curvatures of the Gauduchon connections $\nabla^\tau$ and $\nabla^{\kappa}$ of $(X,\omega(t))$ and $(E,H(t))$ respectively, $\alpha^\prime \in \R$ is the so-called \emph{slope parameter} and the norm of $\Psi$ is defined by
\[
i \Psi \wedge \overline{\Psi}= \lVert \Psi \rVert^2_{\omega} \,\omega^3.
\]

The anomaly flow arises in relation to the Hull-Strominger system \cite{Hull, Str},
\[
	\begin{aligned}
		&F^{\kappa} \wedge \omega^2=0,\\
		&(F^{\kappa})^{2,0}=(F^{\kappa})^{0,2}=0,\\
		&i \partial \overline \partial \omega= \frac{\alpha^\prime}{4}\left( \operatorname{tr}( R^{\tau} \wedge R^{\tau} ) - \operatorname{tr}(F^{\kappa} \wedge F^{\kappa})\right),\\
		&d(\lVert \Psi \rVert_{\omega} \omega^2 )=0,
	\end{aligned}
\]
which follows from imposing the supersymmetry conditions for the internal space of heterotic string theory in ten dimensions.

Since its introduction, the anomaly flow has been studied on some classes of complex manifolds, see for example \cite{PPZ3, FHP,PU}.
In \cite{PPZ2}, a simplified version of the anomaly flow was introduced, given by the flow of Hermitian metrics
\begin{equation} \label{anomaly2}
\tfrac{\partial}{\partial t} (\lVert \Psi \rVert_{\omega(t)} \omega(t)^2 ) = i \partial \overline \partial \omega(t) - \frac{\alpha^\prime}{4} \operatorname{tr}(R^{\tau}_t \wedge R^{\tau}_t ), \quad \omega(0)=\omega_0.
\end{equation}

The flow \eqref{anomaly2} is equivalent to the flow for $\tilde{\omega}(t)=\lVert \Psi \rVert^{\frac{1}{2}}_{\omega(t)} \omega(t)$,
\begin{equation} \label{anomaly_rescal}
\tfrac{\partial}{\partial t} \tilde{\omega}(t)^2= \lVert \Psi \rVert^{-2}_{\tilde{\omega}(t)} i \partial \overline \partial \tilde{\omega}(t) - \frac{\alpha^\prime}{4}\operatorname{tr}(R^{\tau}_t \wedge R^{\tau}_t ),\quad \tilde{\omega}(0)=\lVert \Psi \rVert^{\frac{1}{2}}_{\omega_0} \omega_0,
\end{equation}
or equivalently
\[
\tfrac{\partial}{\partial t} \tilde{\omega}(t)= \frac{1}{2} \lVert \Psi \rVert^{-2}_{\tilde{\omega}(t)} \iota_{\tilde{\omega}(t)} (i \partial \overline \partial \tilde{\omega}(t)) - \frac{\alpha^\prime}{8} \iota_{\tilde{\omega}(t)}\left(\operatorname{tr}(R^{\tau}_t \wedge R^{\tau}_t ) \right), \quad \tilde{\omega}(0)=\lVert \Psi \rVert^{\frac{1}{2}}_{\omega_0} \omega_0.
\]

Let $(\mathfrak{g}(a,v,A),J,g)$ be a Hermitian almost abelian Lie algebra, with fixed unitary basis $\{e_1,\ldots,e_{2n}\}$ adapted to the splitting $\mathfrak{g}=J\mathfrak{k} \oplus \mathfrak{n}_1 \oplus \mathfrak{k}$.

We proceed by discussing the curvature term in \eqref{anomaly2}. First, we examine the case of the Chern connection for generic Hermitian almost abelian Lie algebras. Then, we go back to the generic Gauduchon connection for balanced and LCK almost abelian Lie algebras.

Recalling the expression \eqref{nablatau} for the connections in the Gauduchon line, by a direct computation we first obtain the next result (cf. \cite[Lemma 4.1]{FTa} for the case $\tau=-1$).

\begin{proposition} \label{prop_nablatau}
The canonical Hermitian connection $\nabla^\tau$ of a Hermitian almost abelian Lie algebra $(\mathfrak{g}(a,v,A),J,g)$ is given by
\begin{align*}
	\nabla^{\tau}_{e_1} e_1&= a e_{2n} + \frac{1+\tau}{4} Jv, \\
	\nabla^{\tau}_{e_1} Y&= \frac{1+\tau}{4} \left( - g(Jv,Y)e_1 + g(v,Y)e_{2n} \right) + \tau A^+ JY, \\
	\nabla^{\tau}_{e_1} e_{2n} &= -ae_1 - \frac{1+\tau}{4} v,\\
	\nabla^{\tau}_{X} e_1&= \frac{1-\tau}{2} \left( A^+JX + g(v,X) e_{2n} \right), \\
	\nabla^{\tau}_X Y &=- \frac{1-\tau}{2} \left( g(A^+JX,Y) e_1 - g(A^+X,Y) e_{2n} \right),\\
	\nabla^{\tau}_{X} e_{2n}&= - \frac{1-\tau}{2} \left( g(v,X) e_1 + A^+X\right),\\
	\nabla^{\tau}_{e_{2n}} e_1&= \frac{1+\tau}{4} v,\\
	\nabla^{\tau}_{e_{2n}} Y&= - \frac{1+\tau}{4} g(v,Y) e_1 + A^-Y - \frac{1+\tau}{4} g(Jv,X) e_{2n},\\
	\nabla^{\tau}_{e_{2n}} e_{2n} &= \frac{1+\tau}{4} Jv,
\end{align*}
where $X,Y \in \mathfrak{n}_1$ and $\{e_1,\ldots,e_{2n}\}$ denotes the fixed adapted unitary basis. In particular, for the Chern connection $(\tau=1)$ one has $\nabla^C_X=0$,  for all $X \in \mathfrak{n}_1$.
\end{proposition}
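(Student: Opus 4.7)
The plan is to verify each of the nine formulas by expanding \eqref{nablatau} and computing the three ingredients $\nabla^g$, $T$ and $C$ case by case. All bracket data in the adapted basis reduce to $[e_{2n},e_1]=ae_1+v$ and $[e_{2n},Y]=AY$ for $Y\in\mathfrak{n}_1$, since $\mathfrak{n}$ is abelian, and this is the only input required.

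For the Levi-Civita part, the Koszul identity \eqref{koszul} produces $\nabla^g_X Y$ in closed form once one identifies whether each of $X,Y$ lies in $\R e_1$, $\mathfrak{n}_1$, or $\R e_{2n}$. A pattern worth anticipating: the symmetrization built into Koszul produces the combination $g(AX,Y)+g(AY,X)=2g(A^+X,Y)$ when $X,Y\in\mathfrak{n}_1$ and $Z=e_{2n}$, explaining the appearance of the symmetric part $A^+$ in $\nabla^g_X Y$; while for $X=e_{2n}$ and $Y,Z\in\mathfrak{n}_1$ the relevant combination is $g(AY,Z)-g(AZ,Y)=2g(A^-Y,Z)$, giving the skew part $A^-$ in $\nabla^g_{e_{2n}}Y$. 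The $v$-terms enter through $[e_{2n},e_1]=ae_1+v$ whenever one of the slots is $e_1$ and another is $e_{2n}$.

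For the two torsion-related tensors, the identity $d\alpha=(a\alpha(e_1)+\alpha(v))\,e^1\wedge e^{2n}+A^t(\alpha|_{\mathfrak{n}_1})\wedge e^{2n}$ already displayed in Section \ref{Herm} computes $d\omega$ from each summand of $\omega=e^1\wedge e^{2n}+\sum_{i=2}^{n}e^i\wedge e^{2n+1-i}$; then $T=-Jd\omega$ and $C(X,Y,Z)=-d\omega(JX,Y,Z)$ follow, and the distinction between $v$- and $Jv$-terms in the statement is tracked by the number of $J$-slots these definitions introduce. Plugging the three ingredients into \eqref{nablatau} and identifying $\nabla^\tau_X Y$ slot by slot collects the coefficients as $\tfrac{1\pm\tau}{4}$ and $\tfrac{1-\tau}{2}$ in the stated formulas. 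The final assertion $\nabla^C_X=0$ for $X\in\mathfrak{n}_1$ is then immediate: at $\tau=1$ the prefactor $\tfrac{1-\tau}{2}$ multiplying every line with $X\in\mathfrak{n}_1$ as differentiation direction vanishes. The real obstacle is not conceptual but bureaucratic: nine pair-types $(X,Y)$, each handled in a few lines, with careful tracking of $J$-swaps and sign conventions so that the $A^\pm$ and $Jv$ terms land in the correct slots.
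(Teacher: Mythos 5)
Your proposal is correct and follows the same route the paper takes: the paper gives no written proof beyond ``by a direct computation'' from \eqref{nablatau}, which is exactly the Koszul-plus-$T$-plus-$C$ expansion you outline, and your structural observations (the symmetrization in Koszul producing $A^+$ for $X,Y\in\mathfrak{n}_1$, the antisymmetrization producing $A^-$ in the $e_{2n}$ direction, the $v$-terms arising from $[e_{2n},e_1]=ae_1+v$, and the vanishing of the $\tfrac{1-\tau}{2}$ prefactors at $\tau=1$) all check out against the stated formulas.
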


The fact that $\nabla^C_X=0$, for all $X \in \mathfrak{n}_1$,  also follows directly from \eqref{nablatau}, by exploiting the fact that $\text{ad}_Y \rvert_{\mathfrak{n}_1}$ is $J$-invariant for all $Y \in \mathfrak{g}$: indeed, if $X \in \mathfrak{n}_1$, $Y,Z \in \mathfrak{g}$, recalling the Koszul formula \eqref{koszul} we have
\begin{align*}
C(X,Y,Z)&=-d\omega(JX,Y,Z)=g(J[JX,Y],Z)-g(J[JX,Z],Y)+g(J[Y,Z],JX) \\        &=-g([X,Y],Z)+g([X,Z],Y)+g([Y,Z],X)=-2g(\nabla_X^g Y,Z),
\end{align*}
so that
\[
g(\nabla^C_X \cdot, \cdot) = g(\nabla^g_X \cdot, \cdot) + \frac{1}{2} C(X,\cdot,\cdot)=0, \quad X \in \mathfrak{n}_1.
\]

The connection $1$-forms $(\sigma^{\tau})^i_j \in \mathfrak{g}^*$ and the curvature $2$-forms $(\Omega^{\tau})^i_j$, $i,j=1,\ldots,2n$, are defined respectively by
\begin{align*}
\nabla^\tau_X &= (\sigma^{\tau})^i_j(X) \; e^j \otimes e_i, \\
R^{\tau}(X,Y) &= (\Omega^{\tau})^i_j(X,Y) \; e^j \otimes e_i,
\end{align*}
for $X,Y \in \mathfrak{g}$, with respect to a fixed orthonormal frame $\{e_1,\ldots,e_{2n}\}$,. The latter are related to the former by the Cartan structure relations
\begin{equation} \label{Omegatau}
(\Omega^{\tau})^i_j= d(\sigma^{\tau})^i_j + (\sigma^{\tau})^i_k \wedge (\sigma^{\tau})^k_j.
\end{equation}
We can define
\begin{equation} \label{trcurv}
\operatorname{tr}(R^{\tau} \wedge R^{\tau})=\sum_{i<j} \Omega^i_j \wedge \Omega^i_j.
\end{equation}

An immediate consequence of Proposition \ref{prop_nablatau} is the following fact, which provides a great simplification of the anomaly flow equations, when $\tau=1$.

\begin{corollary} 
The curvature $R^C$  of the Chern connection of a Hermitian almost abelian Lie algebra $(\mathfrak{g}(a,v,A),J,g)$ satisfies $R^C(X,Y)=0$ for all $X \in \mathfrak{n}_1$, $Y \in \mathfrak{g}$, that is,
\[
R^C \in \mathfrak{k}^* \wedge J\mathfrak{k}^* \otimes \text{\normalfont End}(\mathfrak{g}),
\]
where $\mathfrak{k}=\mathfrak{n}^{\perp_g}$. As a consequence,
\[
R^C \wedge R^C = 0.
\]
\end{corollary}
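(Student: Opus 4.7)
The plan is to build everything directly from the vanishing $\nabla^C_X = 0$ for all $X \in \mathfrak{n}_1$, which was already established in Proposition \ref{prop_nablatau} (the $\tau = 1$ case). I would start by writing the curvature of the Chern connection as
\[
R^C(X,Y) = \nabla^C_X \nabla^C_Y - \nabla^C_Y \nabla^C_X - \nabla^C_{[X,Y]},
\]
for $X, Y \in \mathfrak{g}$, and analyze what happens when $X \in \mathfrak{n}_1$. Since $\nabla^C_X$ is identically zero as an operator on $\mathfrak{g}$, both compositions $\nabla^C_X \nabla^C_Y$ and $\nabla^C_Y \nabla^C_X$ vanish: the first is zero because $\nabla^C_X(\cdot) = 0$, and the second because $\nabla^C_Y$ applied to the zero vector is zero.

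It remains to deal with the term $\nabla^C_{[X,Y]}$. Here the key observation is that $[X,Y] \in \mathfrak{n}_1$ for every $Y \in \mathfrak{g}$ whenever $X \in \mathfrak{n}_1$. Indeed, $\mathfrak{n}_1 \subset \mathfrak{n}$ and $\mathfrak{n}$ is an abelian ideal, so $[X, Y'] = 0$ for any $Y' \in \mathfrak{n}$; and for the complementary direction, one checks that $[X, e_{2n}] = -\text{ad}_{e_{2n}}(X) = -AX$, which lies in $\mathfrak{n}_1$ because $A$ preserves $\mathfrak{n}_1$ (recall $A \in \mathfrak{gl}(\mathfrak{n}_1, J_1)$ from \eqref{B}). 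Consequently $\nabla^C_{[X,Y]} = 0$ as well, and combining the two remarks gives $R^C(X,Y) = 0$ for all $X \in \mathfrak{n}_1$ and $Y \in \mathfrak{g}$. By the antisymmetry of $R^C$ in its first two arguments, $R^C(Y,X) = 0$ also, so $R^C$ annihilates $\mathfrak{n}_1$ in both slots.

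For the first displayed conclusion, note that $\mathfrak{g} = J\mathfrak{k} \oplus \mathfrak{n}_1 \oplus \mathfrak{k}$ and the only wedge product of dual basis $1$-forms outside $\mathfrak{n}_1^*$ is $e^1 \wedge e^{2n}$, corresponding to $J\mathfrak{k}^* \wedge \mathfrak{k}^*$. Hence $R^C \in \mathfrak{k}^* \wedge J\mathfrak{k}^* \otimes \text{End}(\mathfrak{g})$, as claimed. For the final statement, each curvature component $(\Omega^C)^i_j$ is, by the above, a scalar multiple of $e^1 \wedge e^{2n}$, and $(e^1 \wedge e^{2n}) \wedge (e^1 \wedge e^{2n}) = 0$ forces every term in the sum \eqref{trcurv} to vanish, giving $\operatorname{tr}(R^C \wedge R^C) = 0$; in fact the stronger identity $R^C \wedge R^C = 0$ at the level of $\text{End}(\mathfrak{g})$-valued $4$-forms holds for exactly the same degree reason. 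There is no real obstacle to this argument; the only point that requires care is verifying that $[\mathfrak{n}_1, \mathfrak{g}] \subseteq \mathfrak{n}_1$, which is where the specific almost abelian structure (abelian nilradical together with $A$ preserving $\mathfrak{n}_1$) is used.
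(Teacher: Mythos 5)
Your proposal is correct and follows essentially the same route as the paper: both use $\nabla^C_X=0$ for $X\in\mathfrak{n}_1$ (from Proposition \ref{prop_nablatau}) together with $[\mathfrak{n}_1,\mathfrak{g}]\subseteq\mathfrak{n}_1$ to kill all three terms of $R^C(X,Y)$, and then conclude $R^C\wedge R^C=0$ by the degree argument in $\Lambda^4(\mathfrak{k}\oplus J\mathfrak{k})^*=0$. You merely spell out the verification that $[X,e_{2n}]=-AX\in\mathfrak{n}_1$, which the paper leaves implicit.
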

\begin{proof}
Following the splitting $\mathfrak{g}=J\mathfrak{k} \oplus \mathfrak{n}_1 \oplus \mathfrak{k}$, inducing
\[
\Lambda^2 \mathfrak{g}^*=\mathfrak{k}^* \wedge J\mathfrak{k}^* \oplus \Lambda^2 \mathfrak{n}_1^* \oplus \mathfrak{n}_1^* \wedge \mathfrak{k}^* \oplus \mathfrak{n}_1^* \wedge J\mathfrak{k}^*,
\]
the first part of the claim follows by
\[
R^C(X,Y)=\nabla^C_X \nabla^C_Y-\nabla^C_Y \nabla^C_X - \nabla^C_{[X,Y]} = 0, \quad X \in \mathfrak{n}_1,\,Y \in \mathfrak{g}, 
\]
where we have used $\nabla^C_X=0$ and the fact that $[X,Y] \in \mathfrak{n}_1$. Then,
\[
R^C \wedge R^C \in \Lambda^4(\mathfrak{k} \oplus J\mathfrak{k})^* \otimes \text{End}(\mathfrak{g})=0. \qedhere
\]
\end{proof}

When $\tau \neq 1$, there is no guarantee that the flow \eqref{anomaly2} is well-defined, since the right-hand side might fail to be a $(2,2)$-form. In \cite{FY}, it was proven that, for unimodular complex Lie groups (that is, unimodular real Lie groups with bi-invariant complex structures), this is never a problem and the aforementioned trace term is always of type $(2,2)$. Recently, in \cite{PU}, the analogous result was shown to hold for $2$-step nilpotent Lie groups with first Betti number  $b_1 \geq 4$ endowed with left-invariant Hermitian structures.

In the almost abelian case, instead, $\operatorname{tr}(R^{\tau} \wedge R^{\tau} )$ is not a $(2,2)$-form, in general, for $\tau \neq 1$. It is, though, under more restrictive hypotheses on the Hermitian structure one is considering. Recall that, combining Propositions \ref{closed(n,0)} and \ref{Bal_avA}, a Hermitian almost abelian Lie algebra $(\mathfrak{g}(a,v,A),J,g)$ is balanced and admits a closed $(n,0)$-form if and only if
\[
v=0,\quad a=\operatorname{tr} A = \operatorname{tr} JA = 0.
\]
In particular, the Lie algebra $\mathfrak{g}$ is decomposable and unimodular.

\begin{lemma}
Let $(\mathfrak{g}(0,0,A),J,g)$, $\operatorname{tr} A=0$, be a six-dimensional balanced almost abelian Lie algebra admitting a closed $(3,0)$-form. Then, for any $\tau \in \R$, one has
\[
\operatorname{tr}(R^\tau \wedge R^\tau) \in \Lambda^{2,2} \mathfrak{g}^*.
\]
More precisely,
\[
\operatorname{tr}(R^\tau \wedge R^\tau)=\omega \wedge (\omega(L \cdot, \cdot)),
\]
where $L=L(A,\tau)$ is a symmetric endomorphism of $\mathfrak{g}$ commuting with $J$, given by
\[
L= \frac{1}{8} \tau (1-\tau)^2 \lVert A^+ \rVert^2 \begin{pmatrix} 
\lVert A^+ \rVert^2 & 0 & 0 \\
    0 & -[A,A^t] & 0 \\
    0 & 0 & \lVert A^+ \rVert^2
 \end{pmatrix},
\]
with respect to the fixed adapted unitary basis. In particular,
\[
\iota_{\omega} \operatorname{tr}(R^\tau \wedge R^\tau)= \omega(L \cdot, \cdot) \in \mathfrak{k}^* \wedge J\mathfrak{k}^* \oplus \Lambda^2 \mathfrak{n}_1^*.
\]
\end{lemma}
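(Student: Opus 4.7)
The argument is a direct computation in the adapted unitary basis, organized around the simplifications imposed by the hypotheses. By Proposition \ref{Bal_avA} combined with Proposition \ref{closed(n,0)}, the balanced plus closed-$(3,0)$ conditions are equivalent to $a=0$, $v=0$, $\operatorname{tr} A=0$, $\operatorname{tr} JA=0$. I would first substitute $a=v=0$ into Proposition \ref{prop_nablatau}, reducing $\nabla^\tau$ to five non-trivial covariant derivatives involving only $A^+$ (for $\nabla^\tau_{e_1}$ and $\nabla^\tau_X$, $X\in\mathfrak{n}_1$) and $A^-$ (for $\nabla^\tau_{e_6}$). From these I would read off the connection $1$-forms $(\sigma^\tau)^i_j \in \mathfrak{g}^*$, noting that the $A^+$ contributions are scaled by $\tau$ when they arise from $\nabla^\tau_{e_1}$ and by $(1-\tau)/2$ when they arise from $\nabla^\tau_X$.

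Next I would assemble the curvature $2$-forms $(\Omega^\tau)^i_j = d(\sigma^\tau)^i_j + (\sigma^\tau)^i_k \wedge (\sigma^\tau)^k_j$ via \eqref{Omegatau}. The almost abelian structure gives $d\mathfrak{g}^* \subset \mathfrak{g}^* \wedge e^6$, so every $d(\sigma^\tau)^i_j$ is a $1$-form wedged with $e^6$, from which $d(\sigma^\tau)^i_j \wedge d(\sigma^\tau)^k_\ell = 0$ for any pair of indices. This kills a large class of terms when one forms
\[
\operatorname{tr}(R^\tau \wedge R^\tau) = \sum_{i<j} (\Omega^\tau)^i_j \wedge (\Omega^\tau)^i_j,
\]
leaving only cross terms of shape $d(\sigma^\tau)\wedge(\sigma^\tau\wedge\sigma^\tau)$ and the purely quartic pieces $\sigma^\tau\wedge\sigma^\tau\wedge\sigma^\tau\wedge\sigma^\tau$. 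At this stage the conditions $\operatorname{tr} A=\operatorname{tr} JA=0$ are used to show that the components of $\operatorname{tr}(R^\tau\wedge R^\tau)$ lying in $\Lambda^3\mathfrak{n}_1^* \wedge e^1$ or $\Lambda^3\mathfrak{n}_1^* \wedge e^6$ cancel, so the remaining components are forced into $\omega \wedge (\mathfrak{k}^*\wedge J\mathfrak{k}^* \oplus \Lambda^2\mathfrak{n}_1^*)$, manifestly of type $(2,2)$.

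Finally I would read off the coefficients of $\omega \wedge (e^1\wedge e^6)$ and of $\omega$ wedged with the pure $\mathfrak{n}_1^*$-part to identify the symmetric, $J$-commuting endomorphism $L$. The prefactor $\tfrac{1}{8}\tau(1-\tau)^2\lVert A^+\rVert^2$ arises naturally: the $(1-\tau)^2$ comes from squaring the $\nabla^\tau_X$-pieces ($X\in\mathfrak{n}_1$), which all carry the factor $(1-\tau)/2$; the extra factor $\tau$ comes from coupling with the $\nabla^\tau_{e_1}$-pieces, which carry the factor $\tau$; and the $\lVert A^+\rVert^2$ comes from summing the resulting trace-type quantities over an orthonormal basis of $\mathfrak{n}_1$. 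The $[A^+,A^-]$-term in the $\mathfrak{n}_1$-block reflects the cross coupling between the $A^+$-contribution in the $\R e_1$-direction and the $A^-$-contribution in the $\R e_6$-direction.

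The main obstacle is purely organisational: in dimension $6$ there are $15$ independent connection $1$-forms and $15$ curvature $2$-forms, and a great many cancellations must be verified. The cleanest route is to split according to $\mathfrak{g} = J\mathfrak{k} \oplus \mathfrak{n}_1 \oplus \mathfrak{k}$ and treat each block of indices $(i,j)$ separately, invoking the trace conditions as they appear. A useful sanity check is the Chern case $\tau=1$, where we already know $R^C\wedge R^C=0$ from the Corollary following Proposition \ref{prop_nablatau}; this is consistent since the prefactor $\tau(1-\tau)^2$ vanishes at $\tau=1$.
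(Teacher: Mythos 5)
Your plan is correct and follows essentially the same route as the paper: substitute $a=v=0$ into Proposition \ref{prop_nablatau}, form the curvature $2$-forms via the Cartan structure equations \eqref{Omegatau}, and compute $\operatorname{tr}(R^\tau\wedge R^\tau)$ directly in the adapted basis to read off $L$. The consistency check at $\tau=1$ against the vanishing of $R^C\wedge R^C$ is a sensible addition, but the substance of the argument is the same explicit computation the paper carries out.
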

\begin{proof}

Using Proposition \ref{prop_nablatau} and formula \eqref{Omegatau}, one can compute the curvature $2$-forms
\begin{alignat*}{2}
	\Omega^1_j =& \tfrac{1}{8} \tau (1-\tau) \lVert A^+ \rVert^2 \, e^{1j} + \tfrac{1}{4} (1-\tau) \left( \left( 2(A^+)^2 - [A,A^t] \right)Je_j \right)^\flat \wedge e^6, \qquad&& j=2,\ldots,5,\\
	\Omega^1_6 =& \tfrac{1}{8} (1-\tau)^2 \lVert A^+ \rVert^2 \, \omega\rvert_{\mathfrak{n}_1}, \\
	\Omega^i_j =& \tfrac{1}{2} \tau \omega \left( [A,A^t]e_i, e_j \right) e^{16} - \tfrac{1}{4} (1-\tau)^2 \left( (A^+e_i)^\flat \wedge (A^+e_j)^\flat \right)^{1,1}, \qquad&& i,j=2,\ldots,5,  \\
	\Omega^i_6 =&\tfrac{1}{8} \tau(1-\tau) \lVert A^+ \rVert^2 \,e^1 \wedge Je^i - \tfrac{1}{4}(1-\tau) \left( \left( 2(A^+)^2 - [A,A^t] \right)e_i \right)^\flat \wedge e^6,\qquad&& i=2,\ldots,5.
\end{alignat*}
where $\alpha^{1,1}\coloneqq\frac{1}{2}(\alpha+J\alpha)$, $\alpha \in \Lambda^2 \mathfrak{g}^*$. One can use these to find $\operatorname{tr}(R^\tau \wedge R^\tau)$ via \eqref{trcurv} and directly verify the claim.
\end{proof}

As a consequence, we obtain the following

\begin{theorem} \label{anomaly_bal}
The anomaly flow \eqref{anomaly2} on almost abelian Lie groups preserves the balanced condition for all values of the parameters $\tau, \alpha \in \R$, in the left-invariant case.
\end{theorem}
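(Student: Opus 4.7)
My approach would be to apply the exterior derivative to both sides of the anomaly flow equation and combine this with Chern--Weil theory.

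First, I would observe that, in the left-invariant setting, both $\omega(t)$ and $\Psi$ (determined by the fixed complex structure $J$) are left-invariant, so $\lVert \Psi \rVert_{\omega(t)}$ is a constant function on $G$ depending only on $t$. Consequently $d(\lVert \Psi \rVert_{\omega(t)}\omega(t)^2) = \lVert \Psi \rVert_{\omega(t)}\, d\omega(t)^2$, and preservation of the conformally balanced condition on the left-hand side is equivalent to preservation of the balanced condition $d\omega^2 = 0$. Well-posedness of the flow on such structures is secured by the preceding lemma together with Proposition \ref{closed(n,0)} and Theorem \ref{Bal_avA}: balanced almost abelian Lie algebras admitting a closed $(3,0)$-form correspond to algebraic data $(0,0,A)$ with $\operatorname{tr} A = \operatorname{tr} JA = 0$, and on these $\operatorname{tr}(R^\tau \wedge R^\tau)$ is of type $(2,2)$, matching the type of the left-hand side.

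The heart of the argument would be to apply $d$ to both sides of the flow equation and commute with $\partial/\partial t$:
\[
\frac{\partial}{\partial t}\, d\!\left(\lVert \Psi \rVert_{\omega(t)} \omega(t)^2\right) = d(i\partial \overline{\partial} \omega(t)) - \frac{\alpha^\prime}{4}\, d\operatorname{tr}(R^\tau_t \wedge R^\tau_t).
\]
The first term on the right vanishes trivially, since $d \circ i\partial\overline{\partial}=0$, while the second vanishes by Chern--Weil theory: for any linear connection on the tangent bundle the form $\operatorname{tr}(R \wedge R)$ is closed, as a consequence of the second Bianchi identity $DR = 0$ combined with the $\operatorname{Ad}$-invariance (cyclicity) of the trace pairing. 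Therefore $d(\lVert \Psi \rVert_{\omega(t)} \omega(t)^2)$ is conserved along the flow; it vanishes at $t=0$ by the balanced assumption on $\omega_0$, hence it vanishes for all $t$, and $\omega(t)$ remains balanced.

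The main point requiring care is verifying that the Chern--Weil computation carries over to the arbitrary canonical Hermitian connection $\nabla^\tau$, which in general carries torsion. This is not a real obstacle: the second Bianchi identity holds for any linear connection regardless of torsion, and the cyclicity of the trace is purely algebraic. Hence the argument applies uniformly in $\tau \in \R$, yielding the theorem.
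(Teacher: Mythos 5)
Your argument is correct, but it is genuinely different from the one in the paper. You run the standard conservation-law argument of Phong--Picard--Zhang: differentiate the flow equation, kill the first term with $d\circ i\partial\overline\partial=0$ and the curvature term with the Chern--Weil/Bianchi identity (which indeed holds for any linear connection, so the argument is uniform in $\tau$), and conclude that $d(\lVert\Psi\rVert_{\omega(t)}\omega(t)^2)$ is conserved and hence identically zero; left-invariance makes $\lVert\Psi\rVert_{\omega(t)}$ a positive constant on $G$, so this is the balanced condition. The paper instead argues purely algebraically: it uses the block form of $L$ in the preceding lemma, together with the fact that $\iota_{\omega}(i\partial\overline\partial\omega)$ lies in $\mathfrak{k}^*\wedge J\mathfrak{k}^*\oplus\Lambda^2\mathfrak{n}_1^*$, to show that the evolving metric preserves the orthogonal splitting $J\mathfrak{k}\oplus\mathfrak{n}_1\oplus\mathfrak{k}$; an adapted basis for $(J,\tilde\omega(t))$ then has data $a(t)=0$, $v(t)=0$, $\operatorname{tr}A(t)=0$, which is the balanced condition by Theorem \ref{Bal_avA}. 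Your route is shorter and more conceptual; the paper's buys the stronger structural information that the flow reduces to an ODE for $A(t)$ alone within the balanced class. One point you should make explicit: the lemma establishes that $\operatorname{tr}(R^\tau\wedge R^\tau)$ is of type $(2,2)$ only \emph{under the balanced hypothesis}, so the flow is a priori well-defined only on the balanced locus, and your conservation law presupposes the solution exists there. This is not fatal --- the right-hand side at a balanced point is a \emph{closed} $(2,2)$-form, hence tangent to the (relatively open subset of a) linear subspace of closed positive $(2,2)$-forms parametrizing balanced metrics, so the ODE restricted to that locus is well-posed and its solution is the solution --- but this tangency step is exactly what the paper's splitting argument supplies directly, and you should not leave it implicit.
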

\begin{proof}
From the previous proposition, together with the fact that $\iota_\omega (i \partial \overline \partial \omega)$ belongs to $\mathfrak{k}^* \wedge J\mathfrak{k}^* \oplus \Lambda^2 \mathfrak{n}_1^*$ (see the proof of Proposition \ref{propQmu}), we readily obtain that the solution $\tilde{\omega}(t)$ to the flow \eqref{anomaly_rescal} preserves the orthogonal splitting $\mathfrak{g}=J\mathfrak{k} \oplus \mathfrak{n}_1 \oplus \mathfrak{k}$ induced by the initial data. Therefore, a suitable adapted unitary basis for $(J,\tilde{\omega}(t))$ will induce algebraic data $(a(t),v(t),A(t))$ satisfying $a(t)=0$, $v(t)=0$, $\operatorname{tr} A(t)=\operatorname{tr} f(t)A(0)=0$ (for some smooth function $f(t)$), implying that $(J,\tilde{\omega}(t))$ is balanced for all $t$. The claim follows from the fact that the solutions to \eqref{anomaly2} and \eqref{anomaly_rescal} differ only by time-dependent rescalings.
\end{proof}

\begin{remark}
The results of Theorem \ref{anomaly_bal}, as well as Proposition \ref{closed(n,0)} and Theorem \ref{Bal_avA}, were obtained independently by M. Pujia \cite{Puj}, using different methods.
\end{remark}	

In the LCK case, instead, we have the following much stronger fact.

\begin{theorem}
Left-invariant LCK metrics on almost abelian Lie groups endowed with a left-invariant complex structure are fixed points of the anomaly flow \eqref{anomaly2}, namely they provide constant solutions of \eqref{anomaly2}, for all values of the parameters $\tau, \alpha \in \R$.
\end{theorem}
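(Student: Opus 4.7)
Since the anomaly flow \eqref{anomaly2} is formulated on a complex $3$-fold endowed with a nowhere-vanishing closed $(3,0)$-form $\Psi$, the plan is to restrict to $n=3$ and to exploit the LCK classification. Among \eqref{LCK1}--\eqref{LCK2}, the first case is forbidden (it forces $n=2$), so we are reduced to case \eqref{LCK2}: $v=0$ and $A=\lambda\,\mathrm{Id}_{\mathfrak{n}_1}+U$ with $U\in\mathfrak{u}(\mathfrak{n}_1,J_1,g)$; the closedness of $\Psi$, via Proposition \ref{closed(n,0)}, further imposes $a=-2\lambda$ and $\operatorname{tr}JU=0$. For $\omega_{0}$ to be a constant solution of \eqref{anomaly2} for every $\tau,\alpha'\in\R$, it suffices (by the freedom of $\alpha'$) to prove the two independent vanishings $i\partial\bar\partial\omega_{0}=0$ and $\operatorname{tr}(R^{\tau}\wedge R^{\tau})=0$ for every $\tau$.

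The first is a direct calculation. The LCK hypothesis reads $d\omega=\alpha\wedge\omega$ with $\alpha=-2\lambda\,e^{2n}$ closed (so $\theta=(n-1)\alpha$), giving $\bar\partial\omega=\alpha^{0,1}\wedge\omega$ and
\[
\partial\bar\partial\omega=(\partial\alpha^{0,1})\wedge\omega-\alpha^{0,1}\wedge\alpha^{1,0}\wedge\omega.
\]
Splitting $\alpha$ by type via $Je^{2n}=-e^{1}$, and using $de^{2n}=0$, $de^{1}=a\,e^{1}\wedge e^{2n}$, together with the fact that $e^{1}\wedge e^{2n}$ is of pure type $(1,1)$, both summands reduce to scalar multiples of $e^{1}\wedge e^{2n}\wedge\omega$ and the computation yields
\[
i\partial\bar\partial\omega=\lambda(a+2\lambda)\,e^{1}\wedge e^{2n}\wedge\omega,
\]
which vanishes upon imposing $a=-2\lambda$.

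For the curvature term we specialize Proposition \ref{prop_nablatau} to $v=0$ and $A=\lambda\,\mathrm{Id}+U$. The key algebraic identities $A^{+}=\lambda\,\mathrm{Id}$, $A^{-}=U$, $[A^{+},A^{-}]=0$, $(A^{+})^{2}=\lambda^{2}\,\mathrm{Id}$, and $JU=UJ$ with $U^{t}=-U$, collapse the expressions for $\nabla^{\tau}$. Reading off the connection $1$-forms $(\sigma^{\tau})^{i}_{j}$ and computing the curvature $2$-forms via Cartan's structure equations \eqref{Omegatau}, one sees that each $(\Omega^{\tau})^{i}_{j}$ takes values in $\R\,e^{1}\wedge e^{2n}\oplus\mathfrak{n}_{1}^{*}\wedge e^{2n}\oplus\mathfrak{n}_{1}^{*}\wedge e^{1}\oplus(\Lambda^{2}\mathfrak{n}_{1}^{*})^{1,1}$, with coefficients polynomial in $\lambda,a,\tau$ and the entries of $U$. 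Forming $\sum_{i<j}(\Omega^{\tau})^{i}_{j}\wedge(\Omega^{\tau})^{i}_{j}$ as in \eqref{trcurv} and substituting $a=-2\lambda$, the resulting $(2,2)$-form cancels term-by-term, yielding $\operatorname{tr}(R^{\tau}\wedge R^{\tau})=0$ for every $\tau$.

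The main obstacle is the curvature computation for arbitrary $\tau$: unlike the lemma preceding Theorem \ref{anomaly_bal} (where $a=0$), the nonzero $a$ generates new contributions through $\nabla^{\tau}_{e_{1}}e_{1}=a\,e_{2n}$, $\nabla^{\tau}_{e_{1}}e_{2n}=-a\,e_{1}$ and the $A^{+}$-pieces of $\nabla^{\tau}_{X}$; these propagate into every $(\Omega^{\tau})^{i}_{j}$ and their cancellation inside $\operatorname{tr}(R^{\tau}\wedge R^{\tau})$ is precisely what the constraint $a+2\lambda=0$ enforces, reflecting the rigid correlation between $\lambda=\tfrac{1}{2(n-1)}\operatorname{tr}A$ and $a$ enforced by the closed $(3,0)$-form.
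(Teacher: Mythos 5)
Your proposal is correct and follows essentially the same route as the paper: reduce to the case $v=0$, $A=-\tfrac{a}{2}\,\mathrm{Id}_{\mathfrak{n}_1}+U$ with $U\in\mathfrak{u}(\mathfrak{n}_1,J_1,g)$ forced by the closed $(3,0)$-form, and kill the two terms on the right-hand side of \eqref{anomaly2} separately, obtaining $\operatorname{tr}(R^\tau\wedge R^\tau)=0$ from Proposition \ref{prop_nablatau} and the Cartan structure equations \eqref{Omegatau}. The only (minor) difference is that the paper gets $i\partial\overline\partial\omega=0$ at once by invoking Proposition \ref{LCK_SKT} (the structure is SKT), whereas you verify it by a direct computation from the LCK identity $d\omega=\alpha\wedge\omega$; both yield the same conclusion.
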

\begin{proof}
Let $(\mathfrak{g}(a,v,A),J,g)$ be a LCK almost abelian Lie algebra of complex dimension $3$ admitting closed $(3,0)$-forms and let $\{e_1,\ldots,e_6\}$ denote the fixed adapted unitary basis. Then, as shown in Proposition \ref{LCK_SKT}, the LCK structure is SKT and the algebraic data satisfies
\[
v=0,\quad A=-\tfrac{a}{2} \text{Id}_{\mathfrak{n}_1} + U,\; U \in \mathfrak{u}(\mathfrak{n}_1,J_1,g).
\]
We need to prove that the right hand side of \eqref{anomaly2} vanishes. The first summand is clearly zero by definition of SKT metric, so that we are left with the curvature term. Using Proposition \ref{prop_nablatau}, the only non-zero components of $R^\tau$, up to symmetries, can be summarized as follows:
\begin{align*}
g(R^\tau (e_1,X)e_1,Y)&=-\tfrac{1}{8} a^2 (\tau+2)(1-\tau)g(X,Y),\\
g(R^\tau (e_1,e_6)e_1,e_6)&=a^2,\\
g(R^\tau (e_1,e_6)X,Y)&=-\tfrac{1}{2} a^2 \tau \omega(JX,Y),\\
g(R^\tau (X,e_6)Y,e_6)&=\tfrac{1}{8} a^2 (1-\tau)g(X,Y),\\
g(R^\tau (X,Y)X,Y)&=\tfrac{1}{16} a^2 (1-\tau)^2\left(1+\omega(X,Y)^2-g(X,Y)^2\right),\\
g(R^\tau (X,Y)e_1,e_6)&=-\tfrac{1}{8} a^2 (1-\tau)^2\omega(X,Y),
\end{align*}
with $X,Y \in \mathfrak{n}_1$, $\lVert X \rVert=\lVert Y \rVert=1$, yielding curvature $2$-forms
\begin{alignat*}{2}
	\Omega^1_j &= \tfrac{1}{8} a^2 (\tau+2)(1-\tau) e^{1j} + \tfrac{1}{8} a^2 (1-\tau) \, Je^j \wedge e^6,\qquad&& j=2,\ldots,5, \\
	\Omega^1_6 &= -a^2 e^{16} + \tfrac{1}{8} a^2 (1-\tau)^2 \omega \rvert_{\mathfrak{n}_1}, && \\
	\Omega^i_j &= \tfrac{1}{2}a^2\tau \omega(e_i,e_j) e^{16} -\tfrac{1}{8} a^2 (1-\tau)^2 (e^{ij})^{1,1},\qquad&& i,j=2,\ldots,5, \\
	\Omega^i_6 &= - \tfrac{1}{8} a^2 (1-\tau) e^{i6} - \tfrac{1}{8} a^2 (1-\tau)(\tau+2) \, e^1 \wedge Je^i,\qquad&& i=2,\ldots,5,
\end{alignat*}
which one can use to explicitly compute $\operatorname{tr}(R^\tau \wedge R^\tau)=0$, concluding the proof.
\end{proof}

\smallskip

\smallskip

\end{document}